\documentclass[11pt]{article} 

\usepackage{preamble} 

\usepackage{tipa} 

\usepackage{linguex} 

\usepackage{graphicx} 

\usepackage{float} 

\usepackage{amsmath, amssymb} 

\usepackage{amsfonts} 

\usepackage{tikz-cd} 

\usepackage{mathabx} 

\usepackage{enumitem} 

\usepackage{capt-of} 

\usepackage[normalem]{ulem} 

\usepackage{mathtools} 

\newcommand{\NE}{\normalfont\textsc{ne}} 

\newcommand{\Sup}{\langle \sup\rangle } 

\newcommand{\Inf}{\langle \inf\rangle } 

\newcommand{\Sast}{\langle {\sup}^\ast\rangle } 

\newcommand{\Past}{ \textnormal{\textsc{P}} } 

\newcommand{\PastH}{ \textnormal{\textsc{H}} } 

\newcommand{\disji}{%
  \mathrel{\raisebox{0.3pt}{%
    \rotatebox[origin=c]{-90}{\scalebox{0.75}{$\geq$}}}}} 
\newcommand{\vvee}{\disji} 

\title{Teams, Truthmakers, Translations
} 

\author{Maria Aloni \\ 
    \small University of Amsterdam \\
        \Hor 
    Alexandru Baltag \\
    \small University of Amsterdam \\
        \Hor
    Johan van Benthem \\
    \small University of Amsterdam \\
    \small Stanford University \\
    \small Tsinghua University \\
        \Ver 
    Nick Bezhanishvili \\ 
    \small University of Amsterdam \\
        \Hor
    Søren Brinck Knudstorp \\ 
    \small University of Amsterdam \\
}

\begin{document}

\setheaderinfo{Teams, Truthmakers, Translations
}{Aloni, Baltag, van Benthem, Bezhanishvili, Knudstorp} 

\maketitle 
\thispagestyle{ACtitlepageFH} 
\startmainpages

\begin{abstract}
    We present a faithful translation from Aloni's logic $\mathsf{BSL}$ into a classical modal logic over semilattices combining ideas from earlier modal translations of truthmaker semantics with algebraic representation theory. We show how our method can also be made to work for other current hyperintensional logics. Finally, we place our results in a broader setting of translating hyperintensional logics into classical modal ones, and discuss applications and new challenges that arise for both areas of philosophical logic connected in this way.
    %
\end{abstract}

\section{Introduction}\label{sec:intro}

\vspace{-0.5ex}

\paragraph{Translations} There are many logical systems in the literature, and the number keeps growing. But sometimes, despite surface differences in presentation and motivation, such systems turn out to be connected by translations. These relate what may have initially looked like quite different or even competing conceptual frameworks. This paper is devoted to establishing a new connection of this kind, as will be explained step by step below.

\vspace{-3ex}

\paragraph{The G\"{o}del translation} For a famous example illustrating a number of key points that are relevant to this paper, consider the case of intuitionistic versus classical logic in the early 20th century. The two systems seemed rivals for capturing what makes mathematics tick, and they polemically disagreed on several prominent classical laws of reasoning. But in 1933 Gödel published his famous translation, let us call it $G$ here, which embeds intuitionistic propositional logic into the classical modal logic $\mathsf{S4}$. This translation uses a modal operator $\Box$ to translate an intuitionistic negation $\neg\varphi$ into $\Box{\sim}$ (now with negation read classically) applied to the translation of $\varphi$, and the same modal strengthening is applied to a classical material implication $\varphi \rightarrow \psi$. Conjunction and disjunction, on the other hand, remain as they are in classical logic. 

While this takes care of the basic intuitionistic logical operators, the translation also contains an independent further step. In line with the intuitionistic core notion of 'having been established' all translated formulas are to be persistent in the sense that $G(\varphi) \rightarrow \Box G(\varphi)$ is valid. This special effect is achieved in part in the translation of the proposition letters $p$, which go to $\Box p$. The modal logic $\mathsf{S4}$ then takes care of lifting this persistence behavior to all translations of intuitionistic propositional formulas. Alternatively, we could have added the persistence as an external special assumption on proposition letters, mapping intuitionistic logic into a `subtheory' of $\mathsf{S4}$. This perspective will be taken in our main result below.

The outcome of all this is that the Gödel translation is faithful in the following sense:

\vspace{1mm}

 For all intuitionistic propositional formulas $\varphi_1, \dots, \varphi_k, \psi$:\vspace{1mm}

	$\varphi_1, \dots, \varphi_k \vDash \psi$ in intuitionistic logic iff  $G(\varphi_1), \dots, G(\varphi_k) \vDash G(\psi)$ in classical modal $\mathsf{S4}$.\footnote{The full result is usually taken to have been proved only in~(\cite{MT48}). However, see~(\cite{HämeenAnttilaPlato2023}) for a different view based on Gödel's unpublished shorthand notes.}
	
\medskip

A few further points are worth noting here as they also apply to the main results in this paper. First, the translation $G$ does not run into just the language of classical propositional logic. This is to be expected as the SAT complexity for classical logic is NP-complete, while  for intuitionistic logic it is PSPACE-complete, a more complex class modulo NP $\neq$ PSPACE. Translations would have to blow up formula size in unnatural ways for this complexity jump. 

But Gödel's added modality $\Box$ is not just a blind added coding gadget since it makes the surplus in the intuitionistic view of negation and implication explicit with a modality that can be read in terms of knowledge or provability instead of truth. This is often the case, interesting translations embody a conceptual idea.\footnote{While this makes $\mathsf{S4}$ look like an extension of intuitionistic logic allowing for non-persistent behavior, a surprising converse faithful translation was given in~(\cite{Fernandez2006}), with an important correction by~\textcite{GoreThomson2019}, suggesting that intuitionistic propositional logic is also a full-fledged theory of modality.}

\vspace{-3ex}

\paragraph{A semantic perspective} One could think of the $G$-translation as a purely syntactic device which relates proofs in intuitionistic logic and classical modal logic. Then major criteria for a good translation would be syntactic or computational simplicity. This is a valid perspective which lends itself to combinatorial proof analysis, and it may well have been that of Gödel himself. Even so, modern presentations of the translation tend to be semantic, and this involves a somewhat richer set-up that we will now explain. 

We are not just syntactically translating formulas from a source logic into those of a target logic, but we are also connecting the models for the two systems. That is, we do not just connect two languages, but also their 'world views'. With $G$, this connection is rather simple: the same relational preorder model can interpret intuitionistic propositional logic and the modal language of $\mathsf{S4}$ (which, as we noted, also allows for non-persistent formulas involving modal diamonds). But note that this setting uses the Kripke semantics for intuitionistic logic introduced in~(\cite{Kripke1965}), whose clauses are  direct renderings of the $G$-translation.\footnote{Inverting the history, one could also say that the Gödel translation just reproduces the meta-language truth conditions for a basic intuitionistic semantics in the object language. Formalizations of truth conditions in fact underlie many translations between logics, cf.~\textcite{Benthem2026}.}

However, in general, translations will involve a pair of a syntactic map $t$ and a simultaneous model transformation $T$ obeying the following general contravariant schema for all formulas $\varphi$ of the source logic:

\medskip

	$\mathsf{M} \vDash t(\varphi)\,\, \text{if and only if} \, \,T(\mathsf{M}) \vDash \varphi.$
	
	\medskip

\noindent This is the perspective that we will need for the main results of this paper.

\vspace{-3ex}

\paragraph{Connecting practices} A translation connects logical systems, but it can also connect broader practices involving these systems. The historical effect of the $G$-translation has been a steady flow of insights both ways, and even a merge of topics between intuitionistic and classical research programs. In this way, the original antagonism has dissipated, and there is a fruitful coexistence of intuitionistic and classical logic in many respects: algebraic, model-theoretic, proof-theoretic, topological, and so on. 

\vspace{-3ex} 

\paragraph{Translations galore} Translations abound in logic, witness the many samples in~(\cite{French2017, BenthemBezhanishviliHolliday2017,Benthem2026}). And new ones keep coming to light, such as from relevant logics or Lambek calculi to classical modal logics (\cite{Knudstorp2026,Kurtunina1995}) and those in~(\cite{HollidayMassas2026}) between much weaker propositional logics than either intuitionistic or classical ones. Interestingly, the $G$-translation returns as a system connector in this realm of weak logics, thus achieving a much broader scope. 

With this background in place, let us now introduce the main topic of this paper.

\paragraph{Hyperintensional logics} The setting for this paper is what is often seen as a new tension today: that between hyperintensional logics versus classical modal logics.\footnote{What hyperintensionality means is a matter that we will not settle here. The original definition by Max Cresswell bestowed this name whenever a system fails to validate some classical equivalences. More interesting forms of hyperintensionality arise when logics deviate from classical ones for some principled reason such as finer modelling of ontological or epistemic distinctions. The systems that we discuss in this paper do just that.}  The systems seem worlds apart, the underlying logics differ: non-classical versus classical, and so on. Hyperintensional logics are usually motivated and developed semantically, and their models and semantics look much richer than those for classical modal logic. But appearances can be deceptive. The additional model structure often comes in the form of more structured worlds and new relations between them. But such relational structures still typically fall within the remit of classical modal logic as well. This is the sense in which \textcite{Benthem2024:kripke} says that, despite some current narratives, doing hyperintensional logic is not doing less, but rather more modal logic. What follows will be in this spirit, though there is more to the issues, and we will also preserve the special nature of each side in this contact.

\vspace{-2ex}

\paragraph{A pilot translation} One motivating example for the analysis in this paper will be a translation of \posscite{Fine17} basic truthmaker logic into a classical modal logic with a binary modality for taking suprema of states. 
We will provide technical details in Section~\ref{sec:TMLtoMIL} below, but here is an explanation of the main ideas. 

Truthmaker logic is interpreted on domains of 'states' ordered by containment, typically on semilattices of these, and it employs two basic semantic notions: '(exact) truthmaking' and '(exact) falsemaking', which are treated on a par in parallel semantic clauses for all logical operators. In particular, a truthmaker negation $\neg\varphi$ switches between these two, thus connecting the two stances. Making a conjunction $\varphi \land \psi$ true is typically done by a fusion or merge of a truthmaker for $\varphi$ and one for $\psi$. For much more on this framework, see (\cite{Fine17,Fine17b}).

While this deviates considerably from classical modal semantics, there are underlying analogies. These same semantic structures also support a classical modal language over preorders (the same semantic realm that had already served so well for intuitionistic logic) with a binary modality $\Sup \varphi \psi$ that is true at those worlds which are suprema of a $\varphi$-world and a $\psi$-world. This system was known from the  literature as 'Modal Information Logic' (\cite{Benthem97}). 

There is in fact a faithful translation of the truthmaker language making this analogy precise, which will be presented in Section~\ref{sec:TMLtoMIL} below as a warm-up for our main result.

\vspace{-2ex}

\paragraph{Richer hyperintensional logics} While this analysis provides a 'proof-of-concept', much more is required to translate current richer hyperintensional logics. Our running example in this paper will be the 'bilateral state-based logic' $\mathsf{BSL}$ of \textcite{Aloni2022} which has been applied extensively by now in the study of natural language semantics and reasoning. We will present the essentials of this system, interpreted semantically over sets of propositional valuations (a form of `team semantics'), and explain some of its subtle behavior in Section~\ref{sec:BSLandMIL}.

In Section~\ref{sec:BSLtoMIL} we then come to our main result: a new faithful translation from $\mathsf{BSL}$ into a modal information logic over semilattices enriched with modalities for suprema over finite sets of elements, either in $\mu$-calculus style or in that of a modal fragment of Weak Second-Order Logic. The translation and the proof of its correctness involve many new ideas, including a central role for a construction reminiscent of Birkhoff's classical representation of finite distributive lattices as set structures. Following our main result for $\mathsf{BSL}$, we will also show how modifications of this translation apply to a wide range of other hyperintensional systems, such as inquisitive logic or propositional team logics.

\vspace{-2ex}

\paragraph{General issues} The proof of our main result is also a case study for more general issues in translating between hyperintensional and classical modal logics, and even between logical systems in general, such as the ones raised in~\textcite{Benthem2026}. A number of such issues will be discussed in Section~\ref{sec:repercussions} below. After that, Section~\ref{sec:summary} will just be a high-level summary.

\vspace{-1ex}

\section{	Setting the scene: Truthmaker logic and modal information logic}\label{sec:TMLtoMIL}

\subsection{Propositional truthmaker logic}
Our starting point are the truthmaker logics introduced by~\textcite{Fine17} (see also~\cite{Fraassen1969} for an early precursor). At their core are new interpretations of the language of propositional logic over binary order structures that allow for binary suprema. There are many options here, from arbitrary preorders with possibly few such suprema to semilattices where all such suprema exist, or perhaps even complete lattices with all, including infinite, suprema. Points or `states' in such structures can be given many interpretations, ranging from parts of the world to information states or even other entities, and accordingly, taking a supremum can be seen as a mereological sum, an information merge, and so on. We will not restrict to any particular interpretation here in order to maintain maximal generality.

Whatever precise structure is chosen, here are the characteristic clauses for interpreting the main propositional operations. The format uses two notions in tandem, making true (denoted by $\vDash^{+}$) and making false (denoted by $\vDash^{-}$) at points $s$ in a model $\mathsf{M}$ whose valuation {$V=(V^+, V^-)$ indicates where proposition letters $p$ are made true (value $V^+(p, s) = 1$) and false (value $V^-(p, s) = 1$). However, unlike in classical logics, there is no presumption that these two values are exhaustive or exclusive, proposition letters might be made neither false nor true, or both true and false:
\medskip

$\mathsf{M}, s \vDash^{+} p \quad \text{iff} \quad V^+(p, s) = 1$ 
\vspace{1mm}

$\mathsf{M}, s \vDash^{-} p \quad \text{iff} \quad V^-(p, s) = 1$ 
\vspace{1mm}

$\mathsf{M}, s \vDash^{+}  \varphi \land \psi \quad \text{iff} \quad \text{there are} \,\,t, u \,\,\text{with} \,\, (i)\,\, s = \sup(t, u), (ii)\,\, \mathsf{M}, t \vDash^{+}  \varphi, (iii)\,\, \mathsf{M}, u \vDash^{+} \psi$
\vspace{1mm}

$\mathsf{M}, s \vDash^{-}  \varphi \land \psi \quad \text{iff} \quad \mathsf{M}, s \vDash^{-}  \varphi \,\, \text{or}\,\, \mathsf{M}, s \vDash^{-} \psi$
\vspace{1mm}

$\mathsf{M}, s \vDash^{+}  \varphi \lor \psi \quad \text{iff} \quad  \mathsf{M}, s \vDash^{+}  \varphi \,\, \text{or}\,\, \mathsf{M}, s \vDash^{-} \psi$
\vspace{1mm}

$\mathsf{M}, s \vDash^{-}  \varphi \lor \psi \quad \text{iff} \quad \text{there are} \,\,t, u \,\,\text{with} \,\, (i)\,\, s = \sup(t, u), (ii)\,\, \mathsf{M}, t \vDash^{-}  \varphi, (iii)\,\, \mathsf{M}, u \vDash^{-} \psi$

\vspace{1mm}

$\mathsf{M}, s \vDash^{+}  \neg\varphi \quad \text{iff} \quad \mathsf{M}, s \vDash^{-}  \varphi $

\vspace{1mm}

$\mathsf{M}, s \vDash^{-}  \neg\varphi \quad \text{iff} \quad \mathsf{M}, s \vDash^{+}  \varphi $

\medskip

This  makes conjunction and disjunction rich modalized non-classical notions that involve the structural operation of merging states.\footnote{ This view is more akin to the `non-Boolean' conjunctions and disjunctions that can be found in natural language where say ``John and Mary quarrel'' refers to a collective object constructed out of the two objects `John' and `Mary'.} By contrast, negation is not modalized (as it was in intuitionistic logic), but just switches truthmaking and falsemaking, leading to several analogies with classical negation (e.g., Double Negation and the De Morgan laws hold, for more see~\cite{Korbmacher22, FineJ19:rsl}), but in other ways, with a very different intent.

In this setting, truthmaker semantics contains a lot of interesting further notions that we will not discuss in depth here. For instance, the notion of truthmaking is `exact', which entails that a state can make a formula true while none of its proper superstates (or substates) in the ordering do, and which causes, e.g., simplification $p\land q\vdash p$ to fail. And also, the `bilateral' semantic format with decoupled truthmaking and falsemaking allows for a plethora of different notions of consequence between formulas. See again~(\cite{Fine17, Fine16}) for a development of the framework and its philosophical applications.

\vspace{-1ex}

\subsection{Modal information logic}

The idea of using suprema in an ordering as natural tools for semantic interpretation was proposed in classical modal logic in~(\cite{Benthem97}) as an extension of the classical system $\mathsf{S4}$ meant to provide a basic modal theory of the pervasive notion of information. If we think of the objects in the usual preorder models $\mathbb{M} = (W, \leq, V)$ for the $\mathsf{S4}$ language as information states, it makes sense to talk about merging such states, and describing such merges with a binary modality $\Sup\varphi\psi$. The key semantic clause for this `Modal Information Logic' $\mathsf{MIL}$ runs as follows:

\medskip


$\mathbb{M}, x \Vdash  \Sup\varphi\psi  \quad \text{iff} \quad \text{there are} \,\,y, z \,\,\text{with} \,\, (i)\,\, x = \sup(y, z), (ii)\,\, \mathbb{M}, y \Vdash  \varphi, (iii)\,\, \mathbb{M}, z \Vdash \psi$

\medskip

\noindent The clauses for $\bot,\top, {\sim}, \vee,\land$ in this system are the usual Boolean ones. 

There is also a natural two-sided enrichment of the language with one more
binary modality $\Inf\varphi\psi$ dually describing infima in the ordering.
One can define the existential unary backward-looking modality $\Past\varphi$ as follows: $\Sup\varphi\top$, which can be seen by verifying that 

\vspace{-3.5ex}
\begin{align*}
        &x\Vdash \Sup\varphi\top && \text{iff} && \text{there exists $y\leq x$ such that } y\Vdash \varphi
    \end{align*}
    
    \vspace{-1ex}

We can then also define the universal downward modality $\PastH\varphi$, and mutatis mutandis, we can define upward modalities by using $\Inf\varphi\psi$.\footnote{We use the standard tense-logical notation for past-looking modalities here, though of course, our models do not have a primary temporal interpretation. Tense logics of lattice structures are studied in~(\cite{WangWang2025}).}

\vspace{1ex}

$\mathsf{MIL}$ is a classical modal logic forming a natural extension of $\mathsf{S4}$ with models interpreted as families of information states, an interpretation still close to the intuitionistic logic that we started with as a point of entry into our topic. But the language with binary modalities can also be interpreted on many other relational models, in the minimal case: arbitrary accessibility relations as used for the minimal modal logic. To make that work, instead of $x = \sup(y, z)$, the binary modality $\Sup\varphi\psi$ just needs an arbitrary ternary relation $Rx, yz$ on models which can be interpreted in many ways: information merge, composition of actions, or even literal  syntactic juxtaposition of linguistic expressions.

Modal Information Logic still shares many features with classical modal logics. It is a normal modal logic in the technical sense, and its expressive power can be analyzed using an extended notion of bisimulation, its formulas can be translated into a first-order language over its relational models, leading to axiomatizability in principle, and so on. Some further features will emerge in the discussions to follow here.

\vspace{-2ex}

\subsection{A translation}\label{subsec:TMLtoMIL}

It should be clear at a glance that truthmaker semantics and $\mathsf{MIL}$ have a lot in common. This is made precise by the following translation first proposed in~(\cite{Benthem19:jpl}):

\vspace{-2.5ex}

\begin{alignat*}{2}
    (p)^+                 &\quad = \quad p_+                      \qquad\qquad & (p)^-                 &\quad = \quad p_-\\
    (\neg \varphi)^+      &\quad = \quad \varphi^-               \qquad\qquad & (\neg \varphi)^-      &\quad = \quad \varphi^+\\
    (\varphi\wedge \psi)^+&\quad = \quad \Sup \varphi^+\psi^+    \qquad\qquad & (\varphi\wedge \psi)^-&\quad = \quad \varphi^-\lor \psi^-\\
    (\varphi\vee \psi)^+  &\quad = \quad \varphi^+\lor \psi^+    \qquad\qquad & (\varphi\vee \psi)^-  &\quad = \quad \Sup \varphi^-\psi^-
\end{alignat*}

\vspace{1mm}

Here is an intuitive explanation of how this works. Two main ideas are involved.

First, truthmaker semantics is a four-valued system.
While this seems vastly different from classical two-valued logics, the four-valuedness can be taken care of quite simply by defining \emph{two} entangled translations, one for the `true' case and one for the `false' case. At the atomic level, this just needs a duplication of proposition letters $p$ into two variants $p_{+}$ and $p_{-}$ with obvious intuitive meanings.\footnote{In fact, a similar parallel-translation format will work for classical translations of any many-valued logic with a fixed finite number of truth values.} 

Note how working with two translations allows us to mimic any semantics which treats truthmaking and falsemaking for a logical operation as independent design options. And the switching role of negation is reflected in switching between the two models of the translation.

There is one feature here that may need some explanation. In this new classical language, there can be four truth value combinations for the two proposition letters $p_{+}$,  $p_{-}$. One can be true, the other false, neither can be true, or there can even be points where both are true.
If we want to suppress the latter option, the translation will have to work on models satisfying a classical modal formula like $\Box{\sim}(p_{+} \land p_{-})$, where $\Box$ is our earlier downward universal modality. 


\vspace{1ex}

Here is the effect of the translation when using `truthmaking-to-truthmaking' consequence $\vDash ^{+}$ between formulas in the propositional truthmaker language, and ordinary classical consequence $\vDash$ for the modal information language:

\begin{theorem} For all truthmaker formulas $\varphi_1, \dots, \varphi_k, \psi$, the following are equivalent:

\vspace{1ex}

(i) $\varphi_1, \dots, \varphi_k \vDash ^{+} \psi$,

\vspace{1ex}

(ii) $(\varphi_1)^{+}, \dots, (\varphi_k)^{+} \vDash (\psi)^{+}$
\end{theorem}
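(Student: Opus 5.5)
The plan is to prove a simultaneous truth lemma for both translations, using the identity correspondence between truthmaker models and $\mathsf{MIL}$ models, and then derive the equivalence of consequence relations from it.

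\textbf{Model correspondence.} Given a truthmaker model $\mathsf{M}=(S,\leq,V^+,V^-)$, let $T(\mathsf{M})$ be the $\mathsf{MIL}$ model on the same order $(S,\leq)$. Its valuation makes $p_+$ true at $s$ iff $V^+(p,s)=1$, and $p_-$ true at $s$ iff $V^-(p,s)=1$. Conversely, every $\mathsf{MIL}$ model over the same class of frames (preorders, or semilattices, whichever class fixes $\vDash^+$) arises this way. Since the truthmaker valuation imposes no exclusivity or exhaustivity on $V^+,V^-$, this map is a bijection between the two classes of models. The notion $x=\sup(y,z)$ is read identically on both sides.

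\textbf{Truth lemma.} By simultaneous induction on $\varphi$, I will show that for every state $s$:
\[
\mathsf{M},s\vDash^{+}\varphi \iff T(\mathsf{M}),s\Vdash \varphi^{+},
\qquad
\mathsf{M},s\vDash^{-}\varphi \iff T(\mathsf{M}),s\Vdash \varphi^{-}.
\]
\begin{itemize}
\item Atoms: immediate from the definition of the valuation.
\item Negation: the two halves swap, so the two induction hypotheses are used crosswise. This is why the induction must be simultaneous.
\item $\vDash^{+}$ for $\land$ and $\vDash^{-}$ for $\lor$: the truthmaker clause, which asks for $t,u$ with $s=\sup(t,u)$, coincides verbatim with the $\Sup$ clause, given the induction hypothesis applied at $t$ and $u$.
\item The remaining clauses: these are pointwise Booleans matching the classical $\lor$.
\end{itemize}
One caution: the paper's clause for $\vDash^{+}\varphi\lor\psi$ contains an evident typo ($\vDash^{-}\psi$). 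I would read it as $\vDash^{+}\psi$, so that it matches $(\varphi\vee\psi)^+=\varphi^+\lor\psi^+$.

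\textbf{Consequence.} I take $\varphi_1,\dots,\varphi_k\vDash^{+}\psi$ to mean: in every model, at every state $s$, if $s$ makes all $\varphi_i$ true then $s$ makes $\psi$ true. Likewise, $\vDash$ is local consequence over the corresponding class of $\mathsf{MIL}$ models.
\begin{itemize}
\item (i)$\Rightarrow$(ii): take any $\mathsf{MIL}$ model $\mathbb{M}$ and point $x$ satisfying all $\varphi_i^+$. Pull $\mathbb{M}$ back to a truthmaker model $\mathsf{M}$ with $T(\mathsf{M})=\mathbb{M}$, apply the truth lemma, use (i), and apply the truth lemma again.
\item (ii)$\Rightarrow$(i): symmetric, pushing a truthmaker model forward via $T$.
\end{itemize}

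\textbf{Main obstacle.} The only delicate point is bookkeeping rather than mathematics. The frame classes on both sides must be matched: if truthmaker models are required to be semilattices, the $\mathsf{MIL}$ consequence must be taken over semilattice models, and similarly for any constraint such as $\Box{\sim}(p_+\land p_-)$. Otherwise one direction of surjectivity of $T$ fails. I would state explicitly that the two classes correspond, and then the proof goes through as described.
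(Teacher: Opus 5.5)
Your proposal is correct and is the argument the paper has in mind. The paper states this theorem without proof, but its general schema $\mathsf{M}\vDash t(\varphi)$ iff $T(\mathsf{M})\vDash\varphi$ — with $T$ essentially the identity on models, valid for any matched class of frames — is exactly your simultaneous truth lemma for $\cdot^{+}$ and $\cdot^{-}$. Your correction of the $\vDash^{+}\varphi\lor\psi$ clause is also right and necessary: under the literal clause, $q\vDash^{+}p\lor q$ fails on a one-state model, while $q_{+}\Vdash p_{+}\lor q_{+}$ is valid.
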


Many variants of truthmaker consequence can be translated faithfully in a similar way (cf.~\cite{Knudstorp2023:synthese}). This translation works on any class of models for both systems: preorders, semilattices, but also arbitrary ternary relations (if one were to generalize the semantics of truthmaker logic that far). 

\vspace{-2ex}

\subsection{Comparing the two sides}\label{subsec:comparison}

Since the preceding translation appeared, a number of results have illuminated the above connection. For instance, motivated by the connection to modal logic,~\textcite{Knudstorp2023:synthese} proves a truthmaker analog of the finite model property and develops a `standard translation' into first-order logic ($\mathsf{FOL}$) to transfer compactness and recursive enumerability. And in fact, this translation into $\mathsf{FOL}$ may simply be viewed as the composition of the above translation with the standard translation of $\mathsf{MIL}$ into $\mathsf{FOL}$. Other connections between the two sides involve the transfer of basic classical modal notions such as bisimulation to the design of similar notions for truthmaker logics, witness the `simulation pairs' of \textcite{Konstantinou2025}.
   
The contact between a hyperintensional logic and a classical modal logic established in this way also raises many issues for both realms. One very basic question concerns the computational complexity of valid consequence. The basic truthmaker logic over semilattices or associative preorders is decidable, as shown in~(\cite{FineJ19:rsl, Knudstorp2023:synthese}).\footnote{Associativity of a ternary ordering $R$ means that (i) if $Rsxw$ and $Rxuv$, then there exists an $y$ such that $Rsuy$ and $Ryvw$, (ii) the same `regrouping' in the direction from right to left.} But the basic modal logic with a binary modality over associative orders or semilattices is undecidable~(\cite{Kurucz93,KuruczNSS95:jolli,Knudstorp2026b}).\footnote{Although it becomes decidable when the modality refers to suprema, or minimal upper bounds, taken over pre- or partial orders (\cite{Franssen2025,Knudstorp2023:jpl,Knudstorp2023:tbillc, FranssenKnudstorp2026b}).} 

What is the explanation of this mismatch? One important feature here is that the above translation takes arbitrary formulas of truthmaker logic into syntactically \emph{positive} formulas of $\mathsf{MIL}$ which do not use classical negation. Positive fragments of logical systems are natural and they play an important role in computational logic or database theory. This feature of the translation illuminates what underlies the decidability of truthmaker logic, though it would go too far to say that hyperintensional logics are positive fragments of classical modal ones. Indeed, the role of classical negation or substitutes for it in hyperintensional logics can be subtle, witness the analyses in~(\cite{Knudstorp2026b,GalatosKnudstorpetal2026}). But the point remains: natural basic questions arise in a translational perspective.

Our new translation in this paper will extend the above notions, results and issues  to a comparison between two richer  logical systems.

\section{Bilateral state-based logic and enriched modal information logic}\label{sec:BSLandMIL}	Here are the two logical systems at issue in the new translation result presented in this paper.

\vspace{-2ex}

\subsection{$\mathsf{BSL}$ over propositional valuation models}


\noindent   $\mathsf{BSL}$ is a variant of $\mathsf{BSML}$, a bilateral team-based modal logic developed in (\cite{Aloni2022})  to explain   phenomena of free choice (see also \cite{Anttila2024}) as well as other puzzling phenomena in natural language semantics. In free choice inferences, a disjunction in the scope of a possibility modal gives rise to conjunctive meanings  (\textit{You may go to the beach or to the cinema $\leadsto$ You may go to the beach}, \cite{Kamp1973}).  $\mathsf{BSL}$, which defines a   non-persistent $\blacklozenge$-modality    (a relative of the {\sc might} from   \cite{Veltman1996})  focuses on epistemic free choice inferences (\textit{Mr.~X might be in Victoria  or    in Brixton 
         $\leadsto$ Mr.~X might be in
      Victoria}, \cite{Zimmermann2000}).

 The  core idea behind $\mathsf{BS(M)L}$ is that free choice and related inferences originate from a tendency in human cognition to  avoid emptiness, something  modeled   by the non-emptiness atom \NE \ from team logics (\cite{YangVäänänen2017}) which interpret formulas with respect to sets of possibilities (teams). \NE \ is supported in a team iff the team is not empty. The main result of $\mathsf{BS(M)L}$ is that by disallowing empty teams using \NE \ we can derive free choice inferences 
 
 \vspace{-1ex}
 
 $$ \blacklozenge  ((\alpha \wedge \NE) \vee (\beta \wedge \NE))  \vDash  \blacklozenge  \alpha$$

\noindent   as well as  their   cancellations under negation: $\neg  \blacklozenge ((\alpha \wedge \NE) \vee (\beta \wedge \NE) ) \vDash \neg \blacklozenge  \alpha$. 

The latter result relies on the adopted bilateralism, where, like in truthmaker semantics, each connective comes with a support  and an antisupport condition with negation as a switch.
Also crucial to $\mathsf{BS(M)L}$ is  the adoption of split disjunction. A team $t$ supports a split disjunction  iff  $t$ is the union of two subteams each supporting one of the disjuncts.  In a sense this gives disjunction a conjunctive flavor as `combination' of alternatives. Our later translation of   split disjunction will use the    $\Sup$-modality making explicit its modalized non-classical nature.
\medskip 

We now proceed to technical details. We fix a non-empty finite set of proposition letters $\mathsf{P}$. The $\mathsf{BSL}$-language over $\mathsf{P}$, denoted $\mathsf{Fm}(\mathsf{P})$, is generated by the following grammar, where $p\in \mathsf{P}$:

\vspace{-1ex}
\[
    \varphi\mathrel{::=} \bot\hspace{0.1cm}|\hspace{0.1cm} \NE\hspace{0.1cm}|\hspace{0.1cm}p \hspace{0.1cm}| \hspace{0.1cm} \neg\varphi\hspace{0.1cm}|\hspace{0.1cm}\varphi\lor\varphi\hspace{0.1cm}|\hspace{0.1cm}\varphi\land\varphi\hspace{0.1cm}|\hspace{0.1cm}\blacklozenge \varphi
\]
\vspace{-2ex}

A \textit{team} $t\subseteq \{v\mid v:\mathsf{P}\to\{0,1\}\}$ is a set of classical valuations over $\mathsf{P}$. A team $t$ \textit{supports} (resp.\ \textit{antisupports}) a formula $\varphi\in \mathsf{Fm}(\mathsf{P})$, written $t\vDash\varphi$ (resp.\ $t\Dashv\varphi$), according to these clauses:

\vspace{-3.5ex}

\begin{align*}
        &t\vDash p && \text{iff} && \text{for all } v\in t, v(p)=1\\
        &t\Dashv p && \text{iff} && \text{for all } v\in t, v(p)=0\\
        &t\vDash \bot && \text{iff} && t=\varnothing\\
        &t\Dashv \bot && \phantom{\text{iff}} && \text{always}\\
        &t\vDash \NE && \text{iff} && t\neq \varnothing\\
        &t\Dashv \NE && \text{iff} && t= \varnothing\\
        &t\vDash \neg\varphi && \text{iff} && t\Dashv \varphi\\
        &t\Dashv \neg\varphi && \text{iff} && t\vDash \varphi\\
        &t\vDash \varphi\land\psi && \text{iff} && t\vDash \varphi \text{ and } t\vDash \psi\\
        &t\Dashv \varphi\land\psi && \text{iff} && \text{there are } t',t'' \,\,\text{with} \,\, (i)\,\, t = t'\cup t'', (ii)\,\, t' \Dashv  \varphi, (iii)\,\, t''\Dashv \psi\\
        &t\vDash \varphi\lor\psi && \text{iff} &&\text{there are } t',t'' \,\,\text{with} \,\, (i)\,\, t = t'\cup t'', (ii)\,\, t' \vDash  \varphi, (iii)\,\, t''\vDash \psi\\
        &t\Dashv \varphi\lor\psi && \text{iff} && t\Dashv \varphi \text{ and } t\Dashv \psi\\
        &t\vDash \blacklozenge\varphi && \text{iff} && \text{there is a non-empty } t'\subseteq t \text{ such that } t'\vDash \varphi\\
        &t\Dashv \blacklozenge\varphi && \text{iff} && t\Dashv \varphi
\end{align*}

We have chosen $\mathsf{BSL}$ as a main focus in this paper because it is a full-fledged hyperintensional logic with serious applications in the semantics of natural language. But this also means that it has some special features which make extending the earlier translation for truthmaker logic into $\mathsf{MIL}$ a non-trivial task. Here are two, both having to do with special closure properties that denotations of formulas must have in the system.

First, as in team semantics,  the direction of growth of information for our teams is in the downward direction. But while indeed the larger part of the above syntax generates formulas which are downward persistent under support, there are notable exceptions. In particular, the crucial non-emptiness atom $\NE$ which underlies the existential modality $\blacklozenge$ is not persistent, since it will fail in the empty team.\footnote{This situation is somewhat analogous to that in modal $\mathsf{S4}$ where existential modalities may become false as we go upward in the information ordering. Incidentally, on a general relational perspective of preorders, which direction we consider as information growth is just a conventional choice.}

Next, unlike in many team semantics, there are also closure conditions in the opposite direction. In particular, for reasons of fit with natural language, $\mathsf{BSL}$ endorses the validity of 

\vspace{-1.5ex}

$$(\varphi \lor \varphi) \vDash \varphi$$

\noindent which can be seen as a sort of `sure thing principle': if a current information state can actually be decomposed into two richer components both supporting $\varphi$, then we should endorse $\varphi$ right now. The technical effect of this requirement is that denotations of formulas will be closed under taking binary suprema, and taking this iteratively, under taking arbitrary suprema of non-empty finite subsets. In other words, we are now talking about denotations that behave more or less like ideals of sets.\footnote{The denotation $\lVert\alpha\rVert$ of formulas $\alpha$ free of $\NE$ and $\blacklozenge$ are ideals of sets, but $\mathsf{BSL}$-denotations need, more generally, only be `weak ideals': closed under binary suprema and under \textit{convexity}, i.e. if $t,t''\in \lVert\varphi\rVert$ and $t\subseteq t'\subseteq t''$, then $t'\in \lVert\varphi\rVert$. Weak ideals, in fact, characterize the denotations of $\mathsf{BSL}$: every $\mathsf{BSL}$-denotation is a weak ideal of sets, and conversely, every weak ideal of sets is the denotation of a $\mathsf{BSL}$-formula~(\cite{AnttilaKnudstorp}). Interestingly, weak ideals are also central in truthmaker semantics, where they make up the regular propositions or the replete content of a formula~(\cite{Fine16,Fine17b, Jago2017}).} This feature, too, must find its place in our translation.


\subsection{Enriched modal information logics over semilattice models}
The modal target language to be used in translating $\mathsf{BSL}$ has a syntax extending that of $\mathsf{MIL}$ introduced above. As explained above, we need to be able to talk in the language about the result of finitely iterating binary suprema. This can be done in two natural ways.
%
%

\vspace{-2ex}

\paragraph{Modal information logic with finite suprema} What we will  need for the purposes of our translation is closure under suprema of finite sets of points in our models. One modal logic  serving this purpose has the
following syntax:

\vspace{-1.5mm}
\[
    \varphi\mathrel{::=} \bot\hspace{0.1cm}|\hspace{0.1cm} p \hspace{0.1cm}| \hspace{0.1cm} {\sim}\varphi\hspace{0.1cm}|\hspace{0.1cm}\varphi\lor\varphi\hspace{0.1cm}|\hspace{0.1cm}\Sup\varphi\varphi\hspace{0.1cm}|\hspace{0.1cm}\Sast\varphi
\]

\vspace{1mm}
Here the modality $\Sast\varphi$ is true at the points which are suprema of some non-empty finite set of points satisfying $\varphi$.  `$\top$' and `$\land$' can, as before, be defined in the usual way. 

We can still view this as a modal logic, as, by a simple argument, this language is invariant for a notion of modal bisimulation with back-and-forth clauses for suprema on either side. 

But as for the validities, note that one may have $x\Vdash \Sast (p\lor q)$ yet $x\nVdash \Sast p\lor \Sast q$, so $\Sast$ does not distribute over disjunctions, nor does it distribute over conjunctions. Still we do have $\Sast \bot\equiv \bot$, while, more generally, this modality is still upward monotone allowing for weakening its propositional argument.\footnote{A slightly different clause for $\Sast$ would allow the finite set of points in the above definition of $\Sast$ to be empty. For more on this variant, see~(\cite{FranssenKnudstorp2026}) which includes a translation of the equational theory of lattices into a modal information logic with this modality (see also~\cite{Franssen2025}).}

This system is interesting in its own right. It can be viewed as a modal fragment of \emph{Weak Second-Order Logic} $\mathsf{WSOL}$ where quantification over sets is restricted to finite sets only. While very natural for many purposes, not all that much is known about $\mathsf{WSOL}$ (see \textcite{LopezEscobar1967} for some early results), let alone its modal fragments.

\vspace{-2ex}

\paragraph{Our work space: distributive semilattices} In what follows, we will work with distributive semilattices as a close abstract approximation to the valuation semantics for $\mathsf{BSL}$ given above. 

\medskip

A \textit{semilattice} is a partial order with all binary suprema, and it is \textit{distributive} if for all $x,y,z$:

\vspace{-3mm}

\[
    x\leq y\curlyvee z \text{ only if there are $y'\leq y$ and $z'\leq z$ such that $x=y'\curlyvee z'$.}
\]

\vspace{-0.5mm}
Here, `$\leq$' is the partial order induced by the semilattice operation, i.e. $x\leq y$ iff $x\curlyvee y =y$.

\vspace{1ex}

Modal information languages can be interpreted in an obvious way in \emph{pointed semilattice models} ($\mathbb{M}, x$) consisting of a semilattice with a valuation sending proposition letters to subsets of the semilattice plus a distinguished point $x$. The general modal clauses that we have introduced earlier apply, with the understanding that the earlier (in general, partial) binary supremum operation $\sup(x, y)$ now becomes the lattice operation $x\curlyvee y$.

\vspace{1ex}

In this special setting, another modal logic also becomes a serious translation target.

\vspace{-2ex}

\paragraph{Modal information logic with fixed-points}  The system $\mu\mathsf{MIL}$  extends the earlier syntax of modal information logic with a smallest fixed-point operator. This extension needs some explanation. The standard modal $\mu$-calculus has only unary modalities, but this time we also allow the binary modality $\Sup \varphi\psi$ of $\mathsf{MIL}$. Apart from that, positive occurrence and fixed-points are defined precisely as for the $\mu$-calculus. 

In particular, this language allows us to define the following fixed-point formula: 

\vspace{-1.5ex}

$$\mu p. (\varphi \lor \Sup pp)$$


\noindent This defines the set of all points that can be decomposed by finite iterations into a set of points satisfying $\varphi$. On semilattices (though not on arbitrary preorders), this is exactly the same set as that defined by the earlier modality $\Sast\varphi$ in modal $\mathsf{WSOL}$.

This fragment of $\mu\mathsf{MIL}$ uses only a small part of the full system, viz. a fixed-point that is always reached in $\omega$ steps in any model where it is computed.  The resulting modal fixed-point logic  shows some analogies with Propositional Dynamic Logic $\mathsf{PDL}$, and in fact, with another interpretation in mind (composition of actions), \textcite{Benthem1994} shows how the standard decidability and completeness proof for $\mathsf{PDL}$ transfers to this setting with a binary modality.

\medskip
The equivalence between the two modal definitions for finite suprema breaks down when we generalize the semantics to pre- or partial orders, in which case the modal $\mathsf{WSOL}$ and $\mu\mathsf{MIL}$ become genuine alternative systems for introducing infinitary notions into a modal language. Both are  of interest in their own right as classical modal logics.

\medskip

Finally, in setting up our translation of $\mathsf{BSL}$, like with the translation of truthmaker consequence (\ref{subsec:TMLtoMIL}), we use a `split version' of the above syntax with $p\in \mathsf{P}_\pm\mathrel{:=}\{p_+, p_-\mid p\in\mathsf{P}\}$. That is, for each proposition letter $p\in \mathsf{P}$ from the team-semantical language, we consider two proposition letters $p_+$ and $p_-$ in the modal language.

\vspace{-1ex}

\section{The translation}\label{sec:BSLtoMIL}

\subsection{The syntactic translation}
    
Our goal is to define a faithful translation 
$\cdot^+$ sending formulas of $\mathsf{BSL}$ to formulas of the above extended modal information language with the following property.  For all $\varphi, \psi\in \mathsf{Fm}(\mathsf{P})$:

\vspace{-3ex}

\begin{align}
    \varphi\vDash\psi\qquad \text{iff} \qquad \Gamma, \varphi^+\Vdash \psi^+\label{iff}
\end{align}


\begin{convention}
    In the above and henceforth, `$\vDash$' refers to semantic support and consequence within $\mathsf{BSL}$, while `$\Vdash$' denotes semantic satisfation and consequence within the modal logic.
\end{convention}

We state the definitions of $\cdot^+$ and $\Gamma$ now, deferring  explanation to the subsection after this.  For a start, we set
\[
    \Gamma\mathrel{:=}\left\{\PastH(\NE^+\lor \NE^-),
    \bigwedge_{p\in \mathsf{P}}\Sup p^+p^-\right\}, 
\]
and then define $\cdot^+$ together with another, interweaved translation $\cdot^-$ (for antisupport $\Dashv$):
\begin{alignat*}{2}
    p^+                      &\quad \mathrel{:=} \quad \PastH\Sast p_+ 
    \qquad\qquad\qquad& p^-                      &\quad \mathrel{:=} \quad \PastH\Sast p_- \\
    \NE^+                    &\quad \mathrel{:=} \quad \bigwedge_{p\in \mathsf{P}}{\sim}(p^+\land p^-) 
    \qquad\qquad\qquad& \NE^-                    &\quad \mathrel{:=} \quad \bigwedge_{p\in \mathsf{P}}(p^+\land p^-) \\
    \bot^+                   &\quad \mathrel{:=} \quad \NE^- 
    \qquad\qquad\qquad& \bot^-                   &\quad \mathrel{:=} \quad \top \\
    (\neg \varphi)^+         &\quad \mathrel{:=} \quad \varphi^- 
    \qquad\qquad\qquad& (\neg \varphi)^-         &\quad \mathrel{:=} \quad \varphi^+ \\
    (\varphi\vee \psi)^+     &\quad \mathrel{:=} \quad \Sup\varphi^+\psi^+ 
    \qquad\qquad\qquad& (\varphi\vee \psi)^-     &\quad \mathrel{:=} \quad \varphi^-\land \psi^- \\
    (\varphi\land \psi)^+    &\quad \mathrel{:=} \quad \varphi^+\land \psi^+ 
    \qquad\qquad\qquad& (\varphi\land \psi)^-    &\quad \mathrel{:=} \quad \Sup\varphi^- \psi^- \\
    (\blacklozenge\varphi)^+ &\quad \mathrel{:=} \quad \Past(\NE^+\land \varphi^+) 
    \qquad\qquad\qquad& (\blacklozenge\varphi)^- &\quad \mathrel{:=} \quad \varphi^-
\end{alignat*}

As we have seen in Section~\ref{sec:intro}, a translation that aims to connect meanings also has a semantic aspect, in the form of a transformation between the models for its logics. We will provide such transformations for $\mathsf{BSL}$ and our modal information logics. However, before defining these, we give an extensive explanation of the clauses of the above syntactic definition. To do so, we also repeat some technical points in our warm-up example of truthmaker logic and $\mathsf{MIL}$.

\vspace{-2ex}

\subsection{Explaining the details of the syntactic translation}

\paragraph{Valuations and ideals} For proposition letters $p\in \mathsf{P}$, we have set
\begin{align*}
    p^+                      &\quad \mathrel{=} \quad \PastH\Sast p_+ 
    \qquad& p^-                      &\quad \mathrel{=} \quad \PastH\Sast p_-
\end{align*}
Two features stand out: the use of a pair of translations, $\cdot^+$ and $\cdot^-$, rather than a single translation, and the prefixing with $\PastH\Sast$. Each corresponds to a semantic feature of $\mathsf{BSL}$.

Indeed, $\cdot^-$ translates the full $\mathsf{BSL}$-language, on a par with $\cdot^+$. Just as in our warm-up example, this is due to meaning in $\mathsf{BSL}$ having both support ($\vDash$) and antisupport ($\Dashv$) as  primitive, requiring a pair of translations to achieve this bilateralism inside the unilateral modal logic. This also explains the doubling of proposition letters from $\mathsf{BSL}$'s $\mathsf{P}$ to the modal logic's $\mathsf{P}_\pm$.\footnote{Please do not confuse the subscripted $p_+\in\mathsf{P}_\pm$, a proposition letter in the modal language, and the superscripted $p^+$, the translation of the proposition letter $p$ in the $\mathsf{BSL}$ language, i.e. $p^+=\PastH\Sast p_+\neq p_+$. Ditto for $p_-$, $p^-$.}

The prefix `$\PastH\Sast$' is best thought of in analogy to the Gödel translation mapping $p$ to $\Box p$. In more detail, note that team-semantical models form powersets, and that the denotations of proposition letters $p$ form \emph{ideals} on these powersets:
\begin{itemize}
    \item (downward closure) if $t\vDash p$ and $t'\subseteq t$, then $t'\vDash p$ \hfill [likewise for $\Dashv$]
    \item (closure under binary joins) if $t\vDash p$ and $t'\vDash p$, then $t\cup t'\vDash p$ \hfill [likewise for $\Dashv$]
\end{itemize}
These characteristics are structurally important (unlike the fact that they are \emph{principal} ideals). In particular, they ensure distributivity for proposition letters and idempotence of $\lor$:
\begin{itemize}
    \item (by downward closure) $p\land (q\lor r)\vDash (p\land q)\lor (p\land r)$
    \item (by closure under binary joins) $p\lor p\vDash p$
\end{itemize}
The prefix `$\PastH\Sast$' mirrors these restrictions. The $\PastH$-prefix ensures that the denotation of $p^+$ (or $p^-$) is downward closed, and the $\Sast$-prefix that it is closed under binary joins.

\paragraph{Valuations and gaps and gluts} While $\mathsf{BSL}$-meaning is bilateral, the support- and antisupport conditions for proposition letters are not independent of each other, unlike the case of truthmaker semantics. They are instead subject to certain interplay leaving neither `gaps' (for all teams $t$, $t\vDash p\lor \neg p$) nor `gluts' (for all teams $t$, $t\vDash p\land \neg p$ iff $t=\varnothing$). The translation models this through a set $\Gamma $ of auxiliary assumptions:
\[
    \Gamma\mathrel{=}\left\{\PastH(\NE^+\lor \NE^-),
    \bigwedge_{p\in \mathsf{P}}\Sup p^+p^-\right\}
\]
The latter, $\bigwedge_{p\in \mathsf{P}}\Sup p^+p^-$, is translated excluded middle, and its inclusion in $\Gamma$ is comparable to the folklore translation of classical propositional logic $\mathsf{CPL}$ into strong Kleene logic $\mathsf{K3}$:
\[
    \varphi\vDash_{\mathsf{CPL}}\psi\qquad \text{iff}\qquad \bigwedge_{p\in\mathsf{P}}(p\lor \neg p),\varphi\vDash_{\mathsf{K3}}\psi.\footnote{If one takes teams to be sets of $\mathsf{K3}$-valuations, i.e. \emph{partial} functions $v:\mathsf{P}\to\{0,1\}$, rather than $\mathsf{CPL}$-valuations, the resulting logic translates under $\cdot^+$ by omitting $\bigwedge_{p\in \mathsf{P}}\Sup p^+p^-$ from $\Gamma$, and instead adding $\PastH\Past\NE^-$ (as $\bigwedge_{p\in \mathsf{P}}\Sup p^+p^-$ does double duty, also entailing $\PastH\Past\NE^-$). We will return to this in more detail later.}
\]

For the former part of $\Gamma$, $\PastH(\NE^+\lor \NE^-)$, a similar story applies, as the translation clauses for $\NE$ make clear:
\begin{alignat*}{2}
    \NE^+                    &\quad \mathrel{=} \quad \bigwedge_{p\in \mathsf{P}}{\sim}(p^+\land p^-) 
    \qquad\qquad\qquad& \NE^-                    &\quad \mathrel{=} \quad \bigwedge_{p\in \mathsf{P}}(p^+\land p^-)
\end{alignat*} 

\vspace{-3ex}

\paragraph{Connectives} Last, the connectives (and constant $\bot$) were translated in the expected way:
\begin{alignat*}{2}
    \bot^+                   &\quad \mathrel{=} \quad \NE^- 
    \qquad\qquad\qquad& \bot^-                   &\quad \mathrel{=} \quad \top \\
    (\neg \varphi)^+         &\quad \mathrel{=} \quad \varphi^- 
    \qquad\qquad\qquad& (\neg \varphi)^-         &\quad \mathrel{=} \quad \varphi^+ \\
    (\varphi\vee \psi)^+     &\quad \mathrel{=} \quad \Sup\varphi^+\psi^+ 
    \qquad\qquad\qquad& (\varphi\vee \psi)^-     &\quad \mathrel{=} \quad \varphi^-\land \psi^- \\
    (\varphi\land \psi)^+    &\quad \mathrel{=} \quad \varphi^+\land \psi^+ 
    \qquad\qquad\qquad& (\varphi\land \psi)^-    &\quad \mathrel{=} \quad \Sup\varphi^- \psi^- \\
    (\blacklozenge\varphi)^+ &\quad \mathrel{=} \quad \Past(\NE^+\land \varphi^+) 
    \qquad\qquad\qquad& (\blacklozenge\varphi)^- &\quad \mathrel{=} \quad \varphi^-
\end{alignat*}

Except for the last clause, this was how the earlier translation of truthmaker logics worked, be it that now the roles of disjunction and conjunction have been flipped. The final clause ensures that the modality $\blacklozenge\varphi$ of $\mathsf{BSL}$ gets its intended meaning.


\subsection {Comparing the two translations}

Beyond the preceding technical perspectives, it may help to compare this new translation of bilateral state logic $\mathsf{BSL}$ with the earlier one for truthmaker logic $\mathsf{TS}$ in terms of the different systems they need to connect. Their hyperintensionality shows at different places. For instance, the following principle fails in $\mathsf{TS}$:

\vspace{-1.5ex}

 $$\varphi \wedge \neg \varphi \equiv \psi \wedge \neg \psi$$

\noindent But in $\mathsf{BSL}$, it is rather the following equivalence that fails (which also fails in $\mathsf{TS}$): 

\vspace{-2mm}

$$\varphi \vee \neg \varphi \equiv \psi \vee \neg \psi$$

Now our translations using $\Sup$ do explain the hyperintensionality at stake here as stemming from a common  cause, since $\mathsf{TS}$ conjunction and $\mathsf{BSL}$ disjunction both become binary modalities, and in general we do not have the modal validity $\Sup\varphi^+ \varphi^- \equiv \Sup\psi^+ \psi^-$.

However, conceptually, the motivations for such failures are quite different in the two logics. 

In $\mathsf{TS}$,  at least in the original motivation, formulas describe worldly structures and   $\Sup$  models the merging or fusion of   different portions of the world: the  fusion of a truthmaker for $p$ and a truthmaker for $\neg p$ can differ   from the fusion of a truthmaker for $q$ and a truthmaker for $\neg q$.  Thus, $p \wedge \neg p \not\equiv q\wedge \neg q$. $\mathsf{BSL}$, in contrast,  is motivated by  natural language phenomena, and $\Sup$ here models the merging of  epistemic possibilities. When saying \textit{A or B} one normally conveys  that  it might be $A$ and it might be  $B$. Thus disjunction  can be viewed as a merge of epistemic options.
Considering  $\varphi$ and $\neg \varphi$ as both epistemically possible is different from considering $\psi$ and $\neg \psi$ as such. Thus
$\varphi \vee \neg \varphi \not\equiv \psi \vee \neg \psi$. For a concrete counter-example, take $\varphi = (p \wedge \NE)$ and $\psi = (q \wedge \NE)$.\footnote{Other counter-examples involve  free choice sentences. When saying \textit{It might be  A or B} you normally convey  that  it might be $A$ and it might be $B$. On the other hand, when saying {\it It might not be A or B} one normally conveys it might not be $A$ and it might not be $B$. Thus   \textit{It might be  A or B} and its negation do not cover the whole logical space: a speaker informed that $A$ is the case and not $B$, would reject both  sentences. Thus  we have another  violation of the Law of Excluded Middle, and, as a consequence, another violation of the above equivalence, this time with $\varphi = \blacklozenge ((p \wedge \NE) \vee (q \wedge \NE))$ and $\psi = \blacklozenge ((r \wedge \NE) \vee (s \wedge \NE))$.}

The different motivation for the two logics, description of ontological structures in
the world ($\mathsf{TS}$) versus explaining patterns in inferential and
communicative human activities ($\mathsf{BSL}$),   is reflected in our translations of the atoms combined with additional restrictions on the target variant of $\mathsf{MIL}$. In the $\mathsf{TS}$ translation $\Sup$ merged portions of the world, while, in $\mathsf{BSL}$, it merged epistemic possibilities.
Thus while going up in the ordering of $\mathsf{TS}$ means reaching a larger description of the world gaining in content and information, for $\mathsf{BSL}$, the top element in the semilattice  represents  a state of minimal information with all possibilities  still open.  

\newpage

Now, the $\mathsf{TS}$ translation for atoms is `variable-fixed' (modulo bilateralism), $(p)^+=p_+$. From the truth of $p$ at a specific state we cannot draw any conclusion w.r.t. to its truth in higher or lower points in our structure. This gives us the expected  inferential behaviour of a sentence corresponding to an \emph{exact} description of a portion of the world, e.g., $p \wedge q \nvDash p$. In $\mathsf{BSL}$, instead,  atoms $p$ are translated by the modal expression $\PastH\Sast p_{+} $. This  encodes the idea that  atom denotations are ideals in an informational setting. An atomic piece of information is downward closed, persisting in lower elements in the structure   closer  to the state of maximal information at the bottom (but it might become a mere possibility if we go up  to the top state where  nothing is known and everything is  possible). It is  also closed under binary joins, if two sets of epistemic possibilities support an atomic piece of information, so will the merging of these epistemic possibilities, their join. 


\vspace{-1ex}

\subsection{The model transformations}\label{subsec:modeltrans}
Our proof of the target equivalence for faithful translation also involves connecting models.  Intuitively, we have two transformations, one immediate, the other requiring more work:

\vspace{-2.5ex}

\[F:\{t\mid t \text{ is a team}\} \rightleftarrows \{(\mathbb{M},x)\mid (\mathbb{M},x)\text{ is a pointed distr. semilattice model with $x\Vdash \Gamma$}\}:G
\]

\vspace{-3ex}

\paragraph{From team models to semilattice models} First, the obvious map $F$ takes any team model $t$ for $\mathsf{BSL}$ to a pointed distributive semilattice model. As we will see, this map is essentially the identity, since teams as a family of sets of valuations form a semilattice under set-theoretic inclusion and union. For this, we will have

\vspace{-3ex}
\begin{align}
    t&\vDash \varphi\qquad \text{iff}\qquad F(t)\Vdash \varphi^+,\label{eq:adjF}
\end{align}
\vspace{-2.5ex}

\noindent which entails the right-to-left direction of~\eqref{iff} (fullness of the translation), i.e.
\begin{align*}
    \varphi\vDash\psi\qquad \Leftarrow \qquad \Gamma, \varphi^+\Vdash \psi^+
\end{align*}
We do not spell out further details here, since we will only use the map $F$ as a prop to show surjectivity of our crucial model transformation $G$.

\paragraph{From semilattice models to team models}\label{par:faithfulness} The map $G$ runs from pointed distributive semilattice models to teams, and it satisfies the following equivalence, which, as we have seen in Section~\ref{sec:intro}, is characteristic for translations in logic:

\vspace{-3ex}
\begin{align}
        G(\mathbb{M}, x)&\vDash \varphi\qquad \text{iff}\qquad \mathbb{M},x\Vdash \varphi^+\label{eq:adjG}
\end{align} 
\vspace{-3ex}

\noindent This entails the converse, left-to-right direction of~\eqref{iff} (faithfulness of the translation), i.e.

\vspace{-3ex}

\begin{align*}
    \varphi\vDash\psi\qquad \Rightarrow \qquad \Gamma, \varphi^+\Vdash \psi^+
\end{align*}
To see this, assume that $\varphi\vDash \psi$ and that $(\mathbb{M}, x)$ is a pointed distributive semilattice model such that $x\Vdash \Gamma$ and $x\Vdash \varphi^+$. We are then to show that $x\Vdash \psi^+$. By~\eqref{eq:adjG} and $x\Vdash \varphi^+$, we have that the team $G(\mathbb{M},x)\vDash \varphi$. So since $\varphi\vDash \psi$, we get that $G(\mathbb{M},x)\vDash \psi$, and hence~\eqref{eq:adjG} entails that  $x\Vdash \psi^+$, as desired. (It is similarly seen that~\eqref{eq:adjF} implies fullness.\footnote{It is natural to ask if existence of a map $F$ satisfying~\eqref{eq:adjF} not only implies fullness, but is equivalent to it. Likewise for $G$ \& \eqref{eq:adjG} and faithfulness. This is not true in general, but if we generalize the notion of consequence from single-conclusion to multi-conclusion in the obvious way, we have the following characterizations:
\begin{align*}
    \forall\Phi_1,\Phi_2\subseteq \mathsf{Fm(P)}:\Phi_1\vDash \Phi_2 \Leftarrow \Gamma, \Phi_1^+\Vdash \Phi_2^+\qquad &\text{iff} \qquad \text{there is a map $F$ satisfying~\eqref{eq:adjF}}\\
    \forall\Phi_1,\Phi_2\subseteq \mathsf{Fm(P)}:\Phi_1\vDash \Phi_2 \Rightarrow \Gamma, \Phi_1^+\Vdash \Phi_2^+\qquad &\text{iff} \qquad \text{there is a map $G$ satisfying~\eqref{eq:adjG}}
\end{align*}
Thus~\eqref{eq:adjF} precisely corresponds to fullness for multi-conclusion consequence, and~\eqref{eq:adjG} to faithfulness. 
})


\paragraph{Surjectivity of $G$}\label{par:GesssurjF} As noted, we will not prove~\eqref{eq:adjF} directly, but use $F$ as a gadget to prove that $G$ is surjective, by showing that the composition $GF$ is the identity: for all teams $t$, $GF(t)=t$.

This suffices because, more generally, a condition equivalent to that of the existence of a map $F$ satisfying~\eqref{eq:adjF} is to require that $G$ is \textit{essentially surjective}, by which we mean that for all teams $t$, there is a pointed model $(\mathbb{M}, x)$ such that
    \[
        G(\mathbb{M}, x)\vDash \varphi\qquad \text{iff}\qquad t\vDash \varphi.
    \]
    (In some cases, like ours, this may even mean that $G$ is surjective.) Defining a map $F$ using essential surjectivity of $G$, then gives a map satisfying~\eqref{eq:adjF}, as we have
    \[
        t\vDash \varphi \qquad \overset{\text{es. surj.}}{\text{iff}}\qquad G(\mathbb{M}, x)\vDash \varphi\qquad \overset{\eqref{eq:adjG}}{\text{iff}}\qquad \mathbb{M},x\Vdash \varphi^+.
    \]
    Conversely, essential surjectivity of $G$ follows from there being a map $F$ meeting~\eqref{eq:adjF}, as we have 
    \[
        t\vDash \varphi\qquad \overset{\eqref{eq:adjF}}{\text{iff}}\qquad F(t)\Vdash\varphi^+ \qquad\overset{\eqref{eq:adjG}}{\text{iff}}\qquad GF(t)\vDash \varphi
    \]
 for any team $t$.   (Likewise, given a function $F$ satisfying~\eqref{eq:adjF}, there exists a function $G$ satisfying~\eqref{eq:adjG} if and only if $F$ is essentially surjective.)

\vspace{-1.5ex}

\paragraph{The definition of $G$} We define $G$ here and only set out $F$ later when used for surjectivity of $G$. 

\begin{definition}\label{def:pointstovals}
    Given a model $\mathbb{M}$, we say that a point $x$ \emph{defines} a classical valuation $v:\mathsf{P}\to\{0,1\}$ if $\forall p\in\mathsf{P}(x\Vdash p^+ \Leftrightarrow v(p)=1, \; x\Vdash p^- \Leftrightarrow v(p)=0)$.

    We say that $x$ \emph{defines some valuation} if there is a valuation that $x$ defines. Equivalently, if $\forall p\in\mathsf{P}: x\Vdash p^+ \text{ iff } x\nVdash p^-.$ We then write $v_x:\mathsf{P}\to \{0,1\}$ for the valuation so defined.
\end{definition}

\begin{definition}[The model transformation $G$]\label{def:adjG}
    Define 
    \[
        \{t\mid t \text{ is a team}\} \leftarrow \{(\mathbb{M},x)\mid (\mathbb{M},x)\text{ is a pointed distr. semilattice model with $x\Vdash \Gamma$}\}:G
    \]
    by mapping $(\mathbb{M},x)$ to the set of valuations defined by points below $x$, i.e.
    \[
        \{v_y\mid y\leq x\text{ and $y$ defines some valuation}\}.
    \]
    \vspace{-1mm}
    
    More explicitly,
    \[
        G(\mathbb{M},x)\mathrel{:=}\{v\mid \exists y\leq x\;\forall p\in\mathsf{P}(y\Vdash p^+ \Leftrightarrow v(p)=1, \; y\Vdash p^- \Leftrightarrow v(p)=0)\}.
    \]
\end{definition}

\vspace{-1mm}

This is the key mathematical construction of the paper. It is reminiscent of Birkhoff's representation of finite distributive lattices, which sends a lattice-element to the set of \emph{join-irreducibles} below it -- a connection that we will elaborate a bit later on. 

A central property of this map is that it is a \emph{p-morphism} (the terminology comes from p-morphisms in modal logic) in the following sense:

\vspace{-1ex}

\begin{enumerate}[leftmargin=40pt]
        \item [\textnormal{(forth)}] $G(\mathbb{M},x\curlyvee y)=G(\mathbb{M}, x)\cup G(\mathbb{M}, y)$,
        \item [\textnormal{(back)}] if $G(\mathbb{M},x)=s\cup t$, then there are $y, z$ such that $x=y\curlyvee z$, $G(\mathbb{M},y)=s$, and $G(\mathbb{M},z)=t$. 
    \end{enumerate}

\vspace{-1ex}

To prove this, and~\eqref{eq:adjG} for faithfulness thereafter, we state four auxiliary lemmas. To keep the narrative tight, we relegate their proofs to the appendix but indicate their main use. 

\vspace{-2ex}

\subsection{Auxiliary lemmas and the p-morphism property}
 
The first lemma shows that, given any set, $\PastH\Sast$ returns a set closed downwards and under binary joins. The former ensures the right-to-left direction of~\eqref{eq:adjG} for literals, while the latter partly ensures left-to-right as will become clear later. We therefore phrase the lemma in terms of valuations of proposition letters.

\begin{convention}
    Throughout, we use $\pm$ in sub- and superscripts to indicate that the statement applies to both the $+$ and $-$ cases. 
\end{convention}

\begin{lemma}\label{lm:ideals}
        Let $(S, \curlyvee, V)$ be a distributive semilattice model. Then $V(p^\pm)$ is closed downwards and under binary joins.\footnote{We use the notational convention that $V(\varphi)\mathrel{:=}\{x\mid x\Vdash \varphi\}$, so, e.g., $V(p^\pm)=\{x\in S\mid (S, \curlyvee, V),x\Vdash p^\pm\}$. }
\end{lemma}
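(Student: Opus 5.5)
Write $A\mathrel{:=}V(p_\pm)$ for the set of points where the modal proposition letter holds. By the semantics, $V(\Sast p_\pm)$ is the set $A^{\ast}$ of suprema of non-empty finite subsets of $A$, and $V(p^\pm)=V(\PastH\Sast p_\pm)=\{x\mid \forall y\leq x:\ y\in A^\ast\}$. We prove the two closure properties separately; the argument uses nothing about $p_\pm$, so it works for any set $A$.

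\emph{Downward closure.} This is immediate from transitivity of $\leq$. Suppose $x\Vdash \PastH\Sast p_\pm$ and $x'\leq x$. Every $y\leq x'$ also satisfies $y\leq x$, so $y\in A^\ast$. Hence $x'\Vdash \PastH\Sast p_\pm$.

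\emph{Closure under binary joins.} Suppose $x,z\in V(p^\pm)$, and let $y\leq x\curlyvee z$. We must show $y\in A^\ast$.

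1. By distributivity of the semilattice there are $x'\leq x$ and $z'\leq z$ with $y=x'\curlyvee z'$.
2. Since $x'\leq x$ and $x\Vdash\PastH\Sast p_\pm$, we get $x'\in A^\ast$. Likewise $z'\in A^\ast$.
3. So $x'=\sup X$ and $z'=\sup Z$ for some non-empty finite $X,Z\subseteq A$.
4. Then $y=\sup(X\cup Z)$, because in a semilattice the join of two finite suprema is the supremum of the union. As $X\cup Z$ is a non-empty finite subset of $A$, we have $y\in A^\ast$.

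Since $y\leq x\curlyvee z$ was arbitrary, $x\curlyvee z\Vdash\PastH\Sast p_\pm$.

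The only step with real content is step 1. Without distributivity, an element below $x\curlyvee z$ need not decompose into pieces below $x$ and below $z$, and the $\PastH$-prefix would then give no information about it. Everything else is routine unfolding of the semantic clauses for $\PastH$ and $\Sast$.
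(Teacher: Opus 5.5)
Your proof is correct and follows the paper's argument essentially step for step. Downward closure comes from transitivity of $\leq$. Join-closure comes from using distributivity to split any $y\leq x\curlyvee z$ as $x'\curlyvee z'$ with $x'\leq x$ and $z'\leq z$, and then merging the finite $p_\pm$-decompositions of $x'$ and $z'$ into one.
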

\begin{proof}
    See the appendix~(\ref{lm:idealsproof}).
\end{proof}

The second lemma says that, under the assumption of $x\Vdash\Gamma$, not only are $V(p^\pm)$ non-empty, but for any point $y\leq x$ there is a point $z\leq y$ that `simulates' the empty team. This is needed for the left-to-right direction of~\eqref{eq:adjG} when disjunction (or negation of conjunction) is present: a team may support a disjunction, say $\varphi\lor \bot$, by virtue of the empty team. It is also convenient to explicitly state here that $V(\Gamma)$ is downward closed -- needed for the above p-morphism claim.

\begin{lemma}
\label{lm:non-emptyideal}
    Let $(\mathbb{M}, x)$ be a pointed distributive semilattice model with $x\Vdash\Gamma$. Then $x\Vdash \PastH\Past \NE^-$, and if $y\leq x$ then $y\Vdash \Gamma$.
\end{lemma}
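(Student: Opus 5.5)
Both claims come down to two observations: $\PastH$-formulas are inherited downwards, and by Lemma~\ref{lm:ideals} each $V(p^\pm)$ is closed downwards and under binary joins.

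\textbf{Downward closure of $V(\Gamma)$.} Let $y\leq x$ with $x\Vdash\Gamma$. The first member of $\Gamma$, $\PastH(\NE^+\lor\NE^-)$, is a universal downward statement, so transitivity of $\leq$ gives $y\Vdash\PastH(\NE^+\lor\NE^-)$.

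For the second member, fix $p\in\mathsf{P}$. From $x\Vdash\Sup p^+p^-$ we get $a,b$ with $x=a\curlyvee b$, $a\Vdash p^+$ and $b\Vdash p^-$. Since $y\leq a\curlyvee b$, distributivity of the semilattice yields $a'\leq a$ and $b'\leq b$ with $y=a'\curlyvee b'$. Downward closure of $V(p^+)$ and $V(p^-)$ (Lemma~\ref{lm:ideals}) gives $a'\Vdash p^+$ and $b'\Vdash p^-$. Hence $y\Vdash\Sup p^+p^-$ for every $p$, and so $y\Vdash\Gamma$.

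\textbf{The claim $x\Vdash\PastH\Past\NE^-$.} By the previous paragraph it suffices to show that every $y\Vdash\Gamma$ satisfies $\Past\NE^-$; applying this to each $y\leq x$ gives the claim. We need a $z\leq y$ with $z\Vdash p^+\land p^-$ for all $p$.

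First, for each $p$, take the decomposition $y=a_p\curlyvee b_p$ with $a_p\Vdash p^+$ and $b_p\Vdash p^-$. We would like a common lower bound of $a_p$ and $b_p$, but semilattices need not have meets. So instead we use the $\PastH(\NE^+\lor\NE^-)$ part of $\Gamma$ together with $\Past$-witnesses. The natural candidate is a \emph{minimal} element: the models need not be finite, so we cannot simply take minimal points, and we must argue differently.

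The plan is as follows. Suppose, for contradiction, that no $z\leq y$ satisfies $\NE^-$. Then every $z\leq y$ satisfies $\NE^+$, i.e.\ for each $p$, $z$ does not satisfy both $p^+$ and $p^-$.

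Now use the decomposition $y=a\curlyvee b$ for a single $p$, with $a\Vdash p^+$ and $b\Vdash p^-$. Since $a\Vdash p^+=\PastH\Sast p_+$, the point $a$ itself satisfies $\Sast p_+$, so $a$ is a finite join of $p_+$-points. Likewise $b$ is a finite join of $p_-$-points.

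The contradiction must come from combining the two decompositions. I expect this to be the main obstacle. Note that $p^+$ is downward closed, so $p^+$ and $p^-$ can fail to overlap below $y$. Then $y\Vdash\NE^+$, yet $y\Vdash\Sup p^+p^-$ still holds; this is the truthful situation of a team containing both $p$ and $\neg p$ valuations. So the argument cannot rely on an overlap of $p^+$ and $p^-$ appearing for free.

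Instead, the intended witness $z$ should be "bottom-like". In a team model the empty team lies below everything and satisfies every $p^\pm$, whereas in an abstract semilattice $\Sast p_+$ requires a \emph{non-empty} supporting set. Hence the key step is to show that some point below $y$ lies below both a $p_+$-point and a $p_-$-point for every $p$ simultaneously. I would attempt this by iterating distributivity: write $a\leq y$, decompose further, and use $\PastH(\NE^+\lor\NE^-)$ on candidate points to force, at some stage, an element satisfying all $p^+\land p^-$. If this fails in general, I would re-examine whether $\Sup p^+p^-$ already implies that the relevant $p^\pm$-sets meet below $y$ via distributivity of $a\curlyvee b$ against itself.
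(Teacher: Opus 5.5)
You handle the downward closure of $V(\Gamma)$ correctly, by the paper's own argument: $\PastH$-formulas pass down by transitivity, and $\Sup p^+p^-$ passes down by distributivity plus downward closure of $V(p^\pm)$ (Lemma~\ref{lm:ideals}). The other claim, $x\Vdash\PastH\Past\NE^-$, is not proved. You set up a contradiction, name the ``bottom-like'' point as the main obstacle, and stop at a plan.

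The missing idea is that distributivity in the paper's sense makes the semilattice downward directed. For any $u,w$ we have $u\le u\curlyvee w$, so distributivity gives $u'\le u$ and $w'\le w$ with $u=u'\curlyvee w'$. In particular $w'\le u$ and $w'\le w$. So any two points, and by iteration any non-empty finite set of points, have a common lower bound. This means two of your remarks are mistaken. You do not need meets, only some lower bound. And $p^+$ and $p^-$ cannot ``fail to overlap below $y$'' once $y\Vdash\Sup p^+p^-$. Your last fallback, applying distributivity to $a\le a\curlyvee b$, is exactly the right move, but you leave it unexecuted.

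With this the claim is immediate. Let $y\le x$. By the part you already proved, $y\Vdash\Sup p^+p^-$, so for each $p\in\mathsf{P}$ we have $y=a_p\curlyvee b_p$ with $a_p\Vdash p^+$ and $b_p\Vdash p^-$. Take a common lower bound $c_p$ of $a_p$ and $b_p$. By downward closure of $V(p^+)$ and $V(p^-)$, $c_p\Vdash p^+\land p^-$, and $c_p\le y$. Since $\mathsf{P}$ is finite and non-empty, pick a common lower bound $z$ of all the $c_p$. Again by downward closure, $z\Vdash\bigwedge_{p\in\mathsf{P}}(p^+\land p^-)=\NE^-$, and $z\le y$, so $y\Vdash\Past\NE^-$. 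As $y\le x$ was arbitrary, $x\Vdash\PastH\Past\NE^-$.

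This also settles the simultaneity across all $p$ that worried you. It needs neither $\PastH(\NE^+\lor\NE^-)$, nor a contradiction argument, nor minimal elements, nor the $\Sast$-decompositions of $a$ and $b$.
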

\begin{proof}
    See the appendix (\ref{lm:non-emptyidealproof}).
\end{proof}
The third auxiliary lemma is best understood as a stepping stone to the fourth lemma following it, which demonstrates how points $x$ can be understood as constituted by the set of valuations defined by points below it.\footnote{One might compare here how Birkhoff's representation of finite distributive lattices relies on (i) every lattice element being the join of join-irreducibles, and (ii) given distributivity, whenever $x=y\curlyvee z$, the join-irreducibles below $x$ are precisely the join-irreducibles that are below $y$ or $z$. Appendix~\ref{app:Birkhoff} strengthens this analogy even further.} 

\begin{lemma}\label{lm:constructionv}
    Let $(\mathbb{M}, x)$ be a pointed distributive semilattice model with $x\Vdash \Gamma$. Then for all $p\in\mathsf{P}$:
    \[
        \text{If } x\nVdash p^+ \text{ and }x\nVdash p^-, \text{ then there are $x^+\curlyvee x^-= s$ such that $x^\pm\Vdash p^\pm$ and $x^\pm\nVdash p^\mp$.}
    \]
\end{lemma}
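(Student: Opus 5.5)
The plan is to read the decomposition off the second member of $\Gamma$ and then use the join-closure half of Lemma~\ref{lm:ideals} to rule out the unwanted "gluts". (I read the "$s$" in the statement as a typo for $x$, so the goal is to decompose $x$ itself.)

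\emph{Step 1: obtain a candidate decomposition.} Since $x\Vdash\Gamma$, in particular $x\Vdash\bigwedge_{p\in\mathsf{P}}\Sup p^+p^-$, so $x\Vdash \Sup p^+p^-$ for the given $p$. By the semantic clause for $\Sup$ on semilattices, there are points $x^+,x^-$ with
\[
x=x^+\curlyvee x^-,\qquad x^+\Vdash p^+,\qquad x^-\Vdash p^-.
\]
It remains to check the two negative requirements $x^+\nVdash p^-$ and $x^-\nVdash p^+$.

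\emph{Step 2: exclude $x^+\Vdash p^-$.} Suppose $x^+\Vdash p^-$. Then both $x^+$ and $x^-$ lie in $V(p^-)$. By Lemma~\ref{lm:ideals}, $V(p^-)$ is closed under binary joins, so $x=x^+\curlyvee x^-\Vdash p^-$. This contradicts the hypothesis $x\nVdash p^-$, hence $x^+\nVdash p^-$.

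\emph{Step 3: exclude $x^-\Vdash p^+$.} Symmetrically, if $x^-\Vdash p^+$, then both points lie in $V(p^+)$, and join-closure from Lemma~\ref{lm:ideals} gives $x\Vdash p^+$, contradicting $x\nVdash p^+$. Hence $x^-\nVdash p^+$.

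I expect no real obstacle here. The only substantive ingredient is closure of $V(p^\pm)$ under binary joins, which is exactly the role of the $\Sast$ prefix and is already provided by Lemma~\ref{lm:ideals}. Distributivity and downward closure are not needed at this step. The one point to watch is that the witnesses supplied by $\Sup$ are exact: $x$ is literally their join, not merely an upper bound, which is what lets join-closure transfer $p^\mp$ back up to $x$.
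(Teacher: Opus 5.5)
Your proposal is correct and matches the paper's proof: you extract $x=x^+\curlyvee x^-$ from the conjunct $\Sup p^+p^-$ of $\Gamma$, then rule out $x^+\Vdash p^-$ and $x^-\Vdash p^+$ using closure of $V(p^\mp)$ under binary joins (Lemma~\ref{lm:ideals}). Your reading of the stray ``$s$'' as $x$ is also right, and your remark that distributivity enters only through Lemma~\ref{lm:ideals} is accurate.
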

\begin{proof}
    See the appendix (\ref{lm:constructionvproof}).
\end{proof}

\begin{lemma}\label{lm:joinofvals}
    Let $(\mathbb{M}, x)$ be a pointed distributive semilattice model with $x\Vdash \Gamma$. If $x\Vdash \NE^+$, then there is a non-empty finite set of points $S_x$ such that 
    \begin{itemize}
        \item $x=\bigcurlyvee S_x$,
        \item each $y\in S_x$ defines a valuation, i.e. $\forall p\in\mathsf{P}:y\Vdash p^+ \text{ iff } y\nVdash p^-$,
        \item $G(\mathbb{M}, x)=\{v_{y}\mid y\in S_x\}$.
    \end{itemize}
    Moreover, whenever $S_x'$ is a non-empty finite set satisfying the first two items, it also satisfies the third.
\end{lemma}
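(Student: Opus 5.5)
I will prove Lemma~\ref{lm:joinofvals} by repeatedly splitting $x$ along proposition letters, using Lemma~\ref{lm:constructionv} (reading its conclusion as $x^+\curlyvee x^-=x$). The splitting builds $S_x$. The third item then follows from a separate argument, based on distributivity and Lemma~\ref{lm:ideals}, which is exactly the ``moreover'' clause.

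\textbf{Construction.} Enumerate $\mathsf{P}=\{p_1,\dots,p_n\}$ and build finite sets $S^{(0)}=\{x\}, S^{(1)},\dots,S^{(n)}$ of points below $x$, with three invariants:
\begin{itemize}
\item $\bigcurlyvee S^{(k)}=x$;
\item every $y\in S^{(k)}$ satisfies $\Gamma$ and $\NE^+$;
\item every $y\in S^{(k)}$ decides $p_1,\dots,p_k$, i.e.\ $y\Vdash p_i^+$ iff $y\nVdash p_i^-$ for $i\le k$.
\end{itemize}
Given $y\in S^{(k)}$, the case $y\Vdash p_{k+1}^+\wedge p_{k+1}^-$ is impossible, since it contradicts $y\Vdash\NE^+$. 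If $y$ satisfies exactly one of $p_{k+1}^\pm$, keep $y$. If it satisfies neither, use Lemma~\ref{lm:constructionv} (applicable since $y\Vdash\Gamma$ by Lemma~\ref{lm:non-emptyideal}) to write $y=y^+\curlyvee y^-$, and replace $y$ by $y^+,y^-$.

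The invariants must be checked for $y^\pm$, and I expect this to be the main obstacle.
\begin{itemize}
\item $\Gamma$ holds because $y^\pm\le y$ and Lemma~\ref{lm:non-emptyideal} applies.
\item Each $y^\pm$ decides $p_{k+1}$ by Lemma~\ref{lm:constructionv}.
\item Each $y^\pm$ still decides $p_i$ for $i\le k$. If $y\Vdash p_i^\pm$, then by downward closure (Lemma~\ref{lm:ideals}) $y^\pm$ satisfies the same literal. It does not satisfy the opposite one: if it did, it would satisfy both, so $y^\pm\Vdash\NE^-$ would fail only if some other letter separates, so this route does not work directly. Instead I use that $y\nVdash p_i^\mp$, and argue that $y^\pm\Vdash p_i^\mp$ together with $y^\pm\Vdash p_i^\pm$ forces $y^\pm$ to satisfy both literals of $p_i$. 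I then need to rule this out, that is, to show that $\NE^+$ persists downward for the split pieces.
\item To get $\NE^+$ for $y^\pm$, I would use $\PastH(\NE^+\lor\NE^-)$. If $y^\pm$ satisfied $\NE^-$, then it would satisfy both $p_{k+1}^+$ and $p_{k+1}^-$, contradicting the fact that, by Lemma~\ref{lm:constructionv}, $y^\pm\nVdash p_{k+1}^\mp$. So $y^\pm\Vdash\NE^+$, which excludes gluts and settles the previous point.
\end{itemize}
The join is preserved since $y$ is replaced by $y^+,y^-$ with $y^+\curlyvee y^-=y$. Set $S_x:=S^{(n)}$; it is non-empty and finite, and satisfies the first two items.

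\textbf{Third item.} It remains to show that any non-empty finite $S'$ with $\bigcurlyvee S'=x$, all of whose members define valuations, satisfies $G(\mathbb{M},x)=\{v_y\mid y\in S'\}$.

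The inclusion $\supseteq$ is immediate, since each $y\in S'$ lies below $x$.

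For $\subseteq$, take $z\le x$ defining $v_z$. By distributivity, iterated over the finite join, write $z=\bigcurlyvee_{y\in S'} z_y$ with $z_y\le y$. By Lemma~\ref{lm:non-emptyideal} and the $\PastH(\NE^+\lor\NE^-)$ part of $\Gamma$, each $z_y$ satisfies $\NE^+$ or $\NE^-$. The set $A:=\{y\mid z_y\Vdash\NE^+\}$ is non-empty: otherwise every $z_y$ satisfies all $p^\pm$, hence so does $z$ by join-closure (Lemma~\ref{lm:ideals}), contradicting that $z$ defines a valuation. Pieces satisfying $\NE^-$ are absorbed by any literal, since they satisfy every literal.

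For $y\in A$, the point $z_y\le y$ satisfies the literals of $v_y$ by downward closure, and not their opposites, because $z_y\Vdash\NE^+$ together with the fact that every letter is satisfied by some literal (via downward closure from $y$) leaves no room for the opposite. So $z_y$ defines $v_y$.

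Now let $p$ be any letter. If the $v_y$ for $y\in A$ disagreed on $p$, then the join of $p^+$-pieces and $p^-$-pieces, together with the $\NE^-$ pieces, would make $z$ satisfy... At this point I would instead argue directly. Suppose $v_z(p)=1$. Then $z\Vdash p^+$, so each $z_y\le z$ satisfies $p^+$, so $v_y(p)=1$ for all $y\in A$. Hence all $v_y$ with $y\in A$ agree with $v_z$, giving $v_z=v_y\in\{v_y\mid y\in S'\}$.
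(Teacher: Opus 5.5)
Your proposal is correct and follows essentially the same route as the paper. Existence is by letter-by-letter splitting via Lemma~\ref{lm:constructionv}, with the pieces shown to satisfy $\NE^+$ through $\PastH(\NE^+\lor\NE^-)$ so that downward closure preserves the earlier decided letters. For the third item, you decompose $z=\bigcurlyvee_{y\in S'}z_y$ by distributivity and find a piece avoiding $\NE^-$, which therefore defines both $v_y$ and $v_z$. In the write-up, delete the two abandoned false starts (in the third invariant bullet and in the ``disagreed on $p$'' sentence) and state the symmetric case $v_z(p)=0$ explicitly.
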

\begin{proof}
    See the appendix (\ref{lm:joinofvalsproof}).
\end{proof}

With these lemmas in hand, that $G$ is a p-morphism (and more) can be proven.

\begin{lemma}[forth]\label{lm:forthG}
    $G$ is a homomorphism. That is, for all pointed distributive semilattice models $(\mathbb{M}, x)$ with $x\Vdash \Gamma$, and all $y,z$ such that $x=y\curlyvee z$, 
    \[
        G(\mathbb{M},x)=G(\mathbb{M}, y)\cup G(\mathbb{M}, z)
    \]
\end{lemma}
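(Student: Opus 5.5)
The inclusion $G(\mathbb{M},y)\cup G(\mathbb{M},z)\subseteq G(\mathbb{M},x)$ is immediate. If $w\leq y$ (or $w\leq z$) defines a valuation $v$, then $w\leq y\curlyvee z=x$, so $v\in G(\mathbb{M},x)$. All the work is in the converse inclusion. For that, fix $v\in G(\mathbb{M},x)$, witnessed by some $w\leq x$ with $v=v_w$. The goal is a point below $y$ or below $z$ that defines $v$.

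Since $w\leq y\curlyvee z$ and the semilattice is distributive, we can write $w=y'\curlyvee z'$ with $y'\leq y$ and $z'\leq z$. Both $y'$ and $z'$ lie below $x$, so by Lemma~\ref{lm:non-emptyideal} they satisfy $\Gamma$, and in particular $\PastH(\NE^+\lor\NE^-)$. Hence each of $y'$ and $z'$ satisfies $\NE^+$ or $\NE^-$. I will also use that $w$ defines a valuation and $\mathsf{P}\neq\varnothing$, so $w\Vdash\NE^+$ and $w\nVdash\NE^-$. I then split into cases on $y'$ and $z'$.

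\emph{Both satisfy $\NE^-$.} Then $y'$ and $z'$ satisfy every $p^+$ and every $p^-$. By the join-closure in Lemma~\ref{lm:ideals}, so does $w$, i.e.\ $w\Vdash\NE^-$. This contradicts the fact that $w$ defines a valuation, so this case cannot occur.

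\emph{Exactly one satisfies $\NE^-$, say $y'$.} I claim $z'$ and $w$ satisfy the same literals $p^\pm$. If $w\Vdash p^\pm$, then $z'\Vdash p^\pm$ by downward closure, since $z'\leq w$. Conversely, if $z'\Vdash p^\pm$, then since $y'\Vdash p^\pm$ as well, join-closure (Lemma~\ref{lm:ideals}) gives $w\Vdash p^\pm$. So $z'$ defines the same valuation $v$, and $z'\leq z$ gives $v\in G(\mathbb{M},z)$. The case with $y'$ and $z'$ swapped is symmetric.

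\emph{Both satisfy $\NE^+$.} Apply Lemma~\ref{lm:joinofvals} to $y'$ and to $z'$ (each satisfies $\Gamma$). This gives non-empty finite sets $S_{y'}$ and $S_{z'}$ of valuation-defining points with $\bigcurlyvee S_{y'}=y'$ and $\bigcurlyvee S_{z'}=z'$. Then $S:=S_{y'}\cup S_{z'}$ is non-empty and finite, consists of valuation-defining points, and has join $w$. The singleton $\{w\}$ has the same two properties. By the ``moreover'' clause of Lemma~\ref{lm:joinofvals} applied to $w$, both sets describe $G(\mathbb{M},w)$:
\[
\{v_u\mid u\in S\}=G(\mathbb{M},w)=\{v_w\}.
\]
So $v=v_u$ for some $u\in S$. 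Then $u\leq y'\leq y$ or $u\leq z'\leq z$, and hence $v\in G(\mathbb{M},y)\cup G(\mathbb{M},z)$.

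I expect the main obstacle to be the fact that $y'$ and $z'$ need not themselves define valuations, which is why the case split is needed. The mixed case is the delicate one. There, the join-closure half of Lemma~\ref{lm:ideals} is essential: it shows that a point satisfying $\NE^-$ (a ``simulated empty team'') does not change which literals hold at the join.
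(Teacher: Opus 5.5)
Your proof is correct, but it is organized differently from the paper's. The paper never fixes a witness. It splits on whether $y$ and $z$ themselves satisfy $\NE^-$, and applies Lemma~\ref{lm:joinofvals} to $z$ (and $y$) to get decompositions into valuation-defining points. From these it assembles a decomposition of $x$: $\{y\curlyvee z'\mid z'\in S_z\}$ in the mixed case, and $S_y\cup S_z$ when both satisfy $\NE^+$. It then reads off $G(\mathbb{M},x)$ from the ``moreover'' clause, so both inclusions come out at once. You instead handle the trivial inclusion directly. For the other, you decompose a single witness $w=y'\curlyvee z'$ by distributivity, which is essentially the argument the paper runs inside its proof of the ``moreover'' clause, specialized to a binary join.

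This local route can be pushed further than you do: your third case does not need Lemma~\ref{lm:joinofvals}. If $u\leq w$, $w$ defines $v$, and $u\Vdash\NE^+$, then downward closure gives $u$ every literal $p^\pm$ that $w$ satisfies. Since $\NE^+$ forbids $u\Vdash p^+\land p^-$, $u$ already defines $v$. Hence the ``obstacle'' you anticipate does not arise. $y'$ or $z'$ can fail to define $v$ only by satisfying $\NE^-$, and join-closure shows they cannot both do so. Whichever one satisfies $\NE^+$ then witnesses $v\in G(\mathbb{M},y)\cup G(\mathbb{M},z)$, which also subsumes your mixed case.

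So the nontrivial inclusion needs only Lemma~\ref{lm:ideals} and the conjunct $\PastH(\NE^+\lor\NE^-)$ of $\Gamma$. It does not need the excluded-middle conjunct $\bigwedge_{p}\Sup p^+p^-$, which enters the paper's proof through Lemmas~\ref{lm:constructionv} and~\ref{lm:joinofvals}. The paper's version, for its part, reuses machinery it needs anyway for the back condition and the atomic case, and it treats forth and back in one uniform ``compute $G$ from a decomposition'' style.
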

\begin{proof}
    First note that, as satisfaction of $\Gamma$ is closed downwards (cf. Lemma~\ref{lm:non-emptyideal}), $G(\mathbb{M}, y)$ and $G(\mathbb{M}, z)$ are well-defined. 

    We prove $G(\mathbb{M},y\curlyvee z)=G(\mathbb{M}, y)\cup G(\mathbb{M}, z)$ by cases.

    If $y\Vdash \NE^-$ and $z\Vdash \NE^-$, then $x=y\curlyvee z\Vdash \NE^-$ because $V(\NE^-)=\bigcap_{p\in \mathsf{P}}(V(p^+)\cap V(p^-))$ is closed under binary joins. Consequently, 
    \(
        G(\mathbb{M},x)=\varnothing=\varnothing\cup \varnothing=G(\mathbb{M}, y)\cup G(\mathbb{M}, z).
    \)

    If $y\Vdash \NE^-$ and $z\nVdash \NE^-$, then $x=y\curlyvee z\nVdash \NE^-$ as $V(\NE^-)$ is downward closed. Thus $z\Vdash \NE^+$ and $x\Vdash \NE^+$, and so, because of the former, the preceding lemma supplies a non-empty finite set $S_z$ with $z=\bigcurlyvee S_z$, each $z'\in S_z$ defining a valuation, and $G(\mathbb{M},z)=\{v_{z'}\mid z'\in S_z\}$. We claim that $S_x'\mathrel{:=}\{y\curlyvee z'\mid z'\in  S_z\}$ satisfies the first two items of the preceding lemma for $x$, and therefore the third. It satisfies the first because $x=y\curlyvee z=y\curlyvee \bigcurlyvee S_z=\bigcurlyvee S_{x'}$; and the second because $S_z$ does and because for each $z'\in S_z$,
    \begin{alignat*}{3}
        y\curlyvee z'\Vdash p^\pm &\quad \Rightarrow &\quad z'\Vdash p^\pm\tag{by downward closure}\\
        y\curlyvee z'\Vdash p^\pm &\quad \Leftarrow &\quad z'\Vdash p^\pm\tag{by $y\Vdash \NE^-$, so $y\Vdash p^\pm$, and binary-join closure}
    \end{alignat*}
    It follows that $G(\mathbb{M},x)=\{v_{y\curlyvee z'}\mid z'\in S_z\}=\{v_{z'}\mid z'\in S_z\}=\varnothing\cup G(\mathbb{M},z)= G(\mathbb{M},y)\cup G(\mathbb{M},z)$.

    Similarly if $y\nVdash \NE^-$ and $z\Vdash \NE^-$. Lastly, if $y\nVdash \NE^-$ and $z\nVdash \NE^-$, then $x,y,z\Vdash \NE^+$, and the preceding lemma supplies $S_y$ and $S_z$ for $y$ and $z$, respectively. It follows immediately that $S_x'\mathrel{:=}S_y\cup S_z$ satisfies the first two items of the preceding lemma for $x$, and therefore the third. Hence \(
        G(\mathbb{M},x)=\{v_{x'}\mid x'\in S_x'\}=\{v_{y'}\mid y'\in S_y\}\cup \{v_{z'}\mid z'\in S_z\}=G(\mathbb{M}, y)\cup G(\mathbb{M}, z).
    \)
\end{proof}

\begin{lemma}[back]\label{lm:backG}
$G$ satisfies the Back condition. That is, for all pointed distributive semilattice models $(\mathbb{M}, x)$ with $x\Vdash \Gamma$, and all teams $s,t$ such that $G(\mathbb{M},x)=s\cup t$, there are $y, z$ such that 
    \[
       x=y\curlyvee z, \qquad G(\mathbb{M},y)=s,\qquad G(\mathbb{M},z)=t.
    \]
\end{lemma}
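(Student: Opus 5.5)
Given $G(\mathbb{M},x)=s\cup t$, I will build $y$ and $z$ by hand from the decomposition that Lemma~\ref{lm:joinofvals} supplies. First I dispose of the case $x\Vdash \NE^-$. Then $G(\mathbb{M},x)=\varnothing$, so $s=t=\varnothing$. I take $y=z=x$: idempotence gives $x=x\curlyvee x$, and $G(\mathbb{M},x)=\varnothing$ settles the rest.

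So assume $x\nVdash \NE^-$. By $x\Vdash\Gamma$, that is, by $\PastH(\NE^+\lor\NE^-)$ at $x$ itself, we get $x\Vdash\NE^+$. Lemma~\ref{lm:joinofvals} then gives a non-empty finite $S_x$ with $x=\bigcurlyvee S_x$, each element defining a valuation, and $G(\mathbb{M},x)=\{v_w\mid w\in S_x\}$. I partition according to $s$ and $t$. Put $S_s:=\{w\in S_x\mid v_w\in s\}$ and $S_t:=\{w\in S_x\mid v_w\in t\}$. Since $s\cup t=G(\mathbb{M},x)$, we have $S_s\cup S_t=S_x$, and every $v\in s$ equals $v_w$ for some $w\in S_x$, hence for some $w\in S_s$; likewise for $t$.

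When $s$ is non-empty, I set $y:=\bigcurlyvee S_s$. Then $y\le x$, so $y\Vdash\Gamma$ by Lemma~\ref{lm:non-emptyideal}, and $S_s$ witnesses the first two items of Lemma~\ref{lm:joinofvals} for $y$. To apply that lemma I need $y\Vdash\NE^+$. This holds because $y\Vdash\NE^-$ would force each $w\le y$ in $S_s$ to satisfy all $p^+$ and $p^-$ by downward closure (Lemma~\ref{lm:ideals}), contradicting that $w$ defines a valuation. The ``moreover'' clause then yields $G(\mathbb{M},y)=\{v_w\mid w\in S_s\}=s$. I define $z$ from $S_t$ in the same way.

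The main obstacle is an empty part, say $s=\varnothing$, since we need $y$ with $G(\mathbb{M},y)=\varnothing$ and $y\curlyvee z=x$. Lemma~\ref{lm:non-emptyideal} gives $x\Vdash\PastH\Past\NE^-$, so there is some $e\le x$ with $e\Vdash\NE^-$, and I take $y:=e$. Then $G(\mathbb{M},e)=\varnothing$, because every point below $e$ satisfies both $p^+$ and $p^-$ by downward closure and therefore defines no valuation. For $t$ I use $z:=x$ if $t=G(\mathbb{M},x)$; this is automatic when $s=\varnothing$, since then $t=s\cup t$. This gives $e\curlyvee x=x$. If both parts are empty, the case falls under the first one. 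In the mixed non-empty case, $y\curlyvee z=\bigcurlyvee(S_s\cup S_t)=\bigcurlyvee S_x=x$.
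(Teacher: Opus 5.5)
Your proof is correct and follows the paper's argument. An empty part is handled by a point $e\le x$ with $e\Vdash\NE^-$ (from $x\Vdash\PastH\Past\NE^-$) paired with $z:=x$; otherwise the set $S_x$ from Lemma~\ref{lm:joinofvals} is split according to $s$ and $t$, and the lemma's ``moreover'' clause gives $G(\mathbb{M},y)=s$ and $G(\mathbb{M},z)=t$. Your preliminary case $x\Vdash\NE^-$ is redundant, since the empty-part case already covers it, and your explicit checks that $y\Vdash\NE^+$ and that $\{v_w\mid w\in S_s\}=s$ fill in steps the paper leaves implicit.
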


\begin{proof}
    If $s=\varnothing$, then $t=G(\mathbb{M},x)$. Besides, by Lemma~\ref{lm:non-emptyideal}, we have $x\Vdash \Past\NE^-$, so there is $y\leq x$ with $y\Vdash \NE^-$. Letting $z\mathrel{:=}x$, gives us the required. Similarly if $t=\varnothing$. 
    
    So suppose $s\neq \varnothing\neq t$. Then $x\Vdash \NE^+$ and the preceding lemma is applicable, providing a non-empty finite set $S_x$. 
    Let $y\mathrel{:=}\bigcurlyvee\{w\in S_x\mid v_{w}\in s\}$ and $z\mathrel{:=}\bigcurlyvee\{w\in S_x\mid v_{w}\in t\}$, and note that the joins are well-defined, as both sets are finite and non-empty (non-empty because $s\neq \varnothing\neq t$). Moreover, $S_x=\{w\in S_x\mid v_{w}\in s\}\cup \{w\in S_x\mid v_{w}\in t\}$, since $\{v_{w}\mid w\in S_x\}=G(\mathbb{M},x)=s\cup t$, whence $x= \bigcurlyvee S_x= y\curlyvee z$. By the uniqueness-part of the previous lemma for $y$ and $\{w\in S_x\mid v_{w}\in s\}$, we obtain $G(\mathbb{M},y)=s$; and similarly we get $G(\mathbb{M},z)=t$,  completing the proof.
\end{proof}

\vspace{-2ex}

\subsection{Faithfulness of the translation}

\begin{lemma}[\ref{eq:adjG}]\label{lm:adjG}
    For all formulas $\varphi\in\mathsf{Fm}(\mathsf{P})$ and pointed distributive semilattice models $(\mathbb{M},x)$ with $x\Vdash \Gamma$, we have the following equivalence:
    \begin{align*}
        G(\mathbb{M}, x)&\vDash \varphi\qquad \text{iff}\qquad \mathbb{M},x\Vdash \varphi^+
\end{align*} 
\end{lemma}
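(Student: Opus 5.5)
We prove the claim by simultaneous induction on $\varphi$, together with its antisupport counterpart
\[
G(\mathbb{M},x)\Dashv\varphi\qquad\text{iff}\qquad \mathbb{M},x\Vdash\varphi^-,
\]
for every pointed distributive semilattice model with $x\Vdash\Gamma$. Lemma~\ref{lm:non-emptyideal} ensures that every $y\le x$ again satisfies $\Gamma$, so the induction hypothesis applies at all points below $x$. The connective cases then follow from Lemmas~\ref{lm:forthG} and~\ref{lm:backG}.

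\textbf{Atoms and constants.} These carry the real content.

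For $\NE$, we first show $G(\mathbb{M},x)=\varnothing$ iff $x\Vdash\NE^-$. If $x\Vdash\NE^-$, every $y\le x$ satisfies $\NE^-$ by downward closure (Lemma~\ref{lm:ideals}), so $y\Vdash p^+$ and $y\Vdash p^-$ for all $p$, and no $y$ defines a valuation. If $x\nVdash\NE^-$, then $x\Vdash\NE^+$ by $\Gamma$, and Lemma~\ref{lm:joinofvals} gives a non-empty $S_x$, so $G(\mathbb{M},x)\neq\varnothing$. Since $\Gamma$ makes $\NE^+$ and $\NE^-$ exhaustive (and they are clearly exclusive, as $\mathsf{P}\neq\varnothing$), both $\NE$ clauses follow. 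The clauses for $\bot$ then follow, with $\bot^-=\top$ trivially matching.

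For $p$, suppose $x\Vdash p^+$. Every $y\le x$ has $y\Vdash p^+$ by Lemma~\ref{lm:ideals}, so every valuation $v_y$ sets $v_y(p)=1$, and $G(\mathbb{M},x)\vDash p$. Conversely, suppose $G(\mathbb{M},x)\vDash p$. If $x\Vdash\NE^-$, then $x\Vdash p^+$ directly. Otherwise, take $S_x$ from Lemma~\ref{lm:joinofvals}. Each $y\in S_x$ defines $v_y\in G(\mathbb{M},x)$, so $v_y(p)=1$ and hence $y\Vdash p^+$. Then $x=\bigcurlyvee S_x\Vdash p^+$ by closure of $V(p^+)$ under binary joins (Lemma~\ref{lm:ideals}), iterated finitely. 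The $p^-$ case is symmetric.

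\textbf{Connectives.} Negation is immediate from the simultaneous hypothesis. Conjunction under support, and disjunction under antisupport, are Boolean on both sides.

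For $\lor$ under support:
\begin{itemize}
\item Left to right: if $G(\mathbb{M},x)=s\cup t$ with $s\vDash\varphi$ and $t\vDash\psi$, Lemma~\ref{lm:backG} yields $x=y\curlyvee z$ with $G(\mathbb{M},y)=s$ and $G(\mathbb{M},z)=t$. The induction hypothesis at $y,z$ gives $x\Vdash\Sup\varphi^+\psi^+$.
\item Right to left: if $x=y\curlyvee z$ with $y\Vdash\varphi^+$ and $z\Vdash\psi^+$, Lemma~\ref{lm:forthG} gives $G(\mathbb{M},x)=G(\mathbb{M},y)\cup G(\mathbb{M},z)$.
\end{itemize}
The antisupport clause for $\land$ is handled identically.

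For $(\blacklozenge\varphi)^-=\varphi^-$, the case is immediate. For $(\blacklozenge\varphi)^+=\Past(\NE^+\land\varphi^+)$ we need: a non-empty $t'\subseteq G(\mathbb{M},x)$ with $t'\vDash\varphi$ exists iff some $y\le x$ satisfies $\NE^+\land\varphi^+$.
\begin{itemize}
\item Right to left: $G$ is monotone, since $y\le x$ means $x=y\curlyvee x$ and Lemma~\ref{lm:forthG} applies. So $G(\mathbb{M},y)\subseteq G(\mathbb{M},x)$, it is non-empty by the $\NE$ case, and it supports $\varphi$ by the induction hypothesis.
\item Left to right: write $G(\mathbb{M},x)=t'\cup G(\mathbb{M},x)$ and apply Lemma~\ref{lm:backG} to get $y\le x$ with $G(\mathbb{M},y)=t'$. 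The induction hypothesis and the $\NE$ case then give $y\Vdash\NE^+\land\varphi^+$.
\end{itemize}

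\textbf{Main obstacle.} The delicate step is the right-to-left atomic case, which needs the decomposition of Lemma~\ref{lm:joinofvals}, and the empty-team bookkeeping for $\NE$. Both depend on $\Gamma$ making $\NE^\pm$ exhaustive. Once these base cases are established, the p-morphism lemmas carry the inductive steps routinely.
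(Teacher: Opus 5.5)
Your proof is correct and follows essentially the paper's argument: a simultaneous induction on support and antisupport; atomic and $\NE$ cases resting on Lemmas~\ref{lm:ideals} and~\ref{lm:joinofvals} together with the exhaustiveness of $\NE^\pm$ given by $\Gamma$; and the support case of $\lor$ (and antisupport case of $\land$) handled by the forth and back lemmas. The differences are cosmetic: you prove the atomic case directly rather than by contraposition, and you treat $\blacklozenge$ directly via back and monotonicity, whereas the paper reduces it to the $\lor$ case through $(\blacklozenge\varphi)^+ = [(\varphi\land\NE)\lor\neg\bot]^+$, which unfolds to the same argument.
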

\begin{proof}
    Let $(\mathbb{M},x)$ be an arbitrary distributive semilattice models with $x\Vdash \Gamma$, and recall from Definition~\ref{def:adjG} that $G(\mathbb{M},x)$ is the set of valuations
    \[
        \{v\mid \exists y\leq x\;\forall p\in\mathsf{P}(y\Vdash p^+ \Leftrightarrow v(p)=1, \; y\Vdash p^- \Leftrightarrow v(p)=0)\}.
    \]
    We prove that for all $\varphi\in\mathsf{Fm}(\mathsf{P})$ and $y\leq x$,
    \begin{align*}
        G(\mathbb{M},y)\vDash \varphi\qquad \text{iff}\qquad \mathbb{M},y\Vdash \varphi^+,\\
        G(\mathbb{M},y)\Dashv\varphi \qquad \text{iff}\qquad \mathbb{M},y\Vdash \varphi^-.
    \end{align*}
    (Recall that $V(\Gamma)$ is downward closed (as per Lemma~\ref{lm:non-emptyideal}), so $G(\mathbb{M},y)$ is well-defined for $y\leq x$.)
    
    The proof is by induction on $\varphi$. 
    
    For proposition letters $p\in\mathsf{P}$, we have:
    
    \vspace{-1ex}
    
    \[
        G(\mathbb{M},y)\vDash p\qquad \overset{\text{(def)}}{\text{iff}} \qquad \forall v\in G(\mathbb{M},y)(v(p)=1)\qquad \overset{(i)}{\text{iff}}\qquad \mathbb{M},y\Vdash p^+.
    \]
    
    \vspace{1mm}
    
    Here, right-to-left of $(i)$ holds since $V(p^+)$ 
    is a downset, and left-to-right since it is closed under binary joins. While the former is clear, for the latter, assume by way of contraposition that $y\nVdash p^+$. Then $y\nVdash \NE^-$, hence $y\Vdash \NE^+$, so Lemma~\ref{lm:joinofvals} provides a non-empty finite set $S_y$ with $y=\bigcurlyvee S_y$, each $y'\in S_y$ defining a valuation, and $G(\mathbb{M},y)=\{v_{y'}\mid y'\in S_y\}$. Closure of $V(p^+)$ under binary joins then implies that for some $y'\in S_y$, $y'\nVdash p^+$, and so $v_{y'}(p)=0$, as desired.
    
    Similarly, we get $G(\mathbb{M},y)\Dashv p$ iff $\mathbb{M},y\Vdash p^-$.

    Next, for $\NE^+$ we have
    \[
        G(\mathbb{M},y)\vDash \NE\quad \overset{\text{(def)}}{\text{iff}}\quad G(\mathbb{M},y)\neq \varnothing \quad \overset{(ii)}{\text{iff}}\quad \mathbb{M},y\Vdash \bigwedge_{p\in \mathsf{P}}{\sim}(p^+\land p^-)\quad \overset{\text{(def)}}{\text{iff}}\quad \mathbb{M},y\Vdash \NE^+.
    \]
    
    \vspace{-1ex}
    
    Right-to-left of $(ii)$ follows by Lemma~\ref{lm:joinofvals}. 
    Conversely, if there is $p\in\mathsf{P}$ such that $y\Vdash p^+\land p^-$, then we also have $z\Vdash p^+\land p^-$ for all $z\leq y$, hence $G(\mathbb{M},y)=\varnothing$. 

    Using this, we also obtain
    
    \vspace{-0.5ex}
    
    \[
        G(\mathbb{M},y)\Dashv \NE \quad \text{iff}\quad \mathbb{M},y\Vdash \NE^-,
    \]
    
        \vspace{-1ex}
        
    \noindent since we have that
    
            \vspace{-3.5ex}
    \begin{align*}
        G(\mathbb{M},y)\Dashv \NE\quad &\text{iff}\quad G(\mathbb{M},y)\nvDash \NE,\\
        \mathbb{M},y\nVdash \NE^+ \quad &\text{iff}\quad \mathbb{M},y\Vdash\NE^-,
    \end{align*}
    
            \vspace{-1ex}
    \noindent where the former holds by definition and the latter because $y\Vdash \PastH(\NE^+\lor \NE^-)$, so we know that $y\Vdash \NE^+\lor \NE^-$, and also $y\nVdash \NE^+\land \NE^-$.   

    The last base case, $\bot$, is then immediate.

    Among the inductive steps, only the support cases for diamond $\blacklozenge$ and disjunction $\lor$ (as well as the analogous antisupport case for conjunction $\land$) are non-trivial. Since a team supports $\blacklozenge\varphi$ iff it supports $(\varphi\land \NE)\lor \neg\bot$, and a world satisfies $(\blacklozenge\varphi)^+$ iff it satisfies $[(\varphi\land \NE)\lor \neg\bot]^+$, the $\blacklozenge$-support case will follow if we establish the $\lor$-support case. Thus it remains only to establish the support case for disjunction (again, the antisupport case for conjunction is proven entirely analogously). And for this, it suffices simply that $G$ is a p-morphism.
    
    The forth condition (or homomorphic condition) ensures that
    \[
        G(\mathbb{M},y)\vDash \varphi\vee\varphi'\quad \Leftarrow \quad \mathbb{M},y\Vdash (\varphi\vee\varphi')^+ ,
    \]
    and the back condition that
    \[
        G(\mathbb{M},y)\vDash \varphi\vee\varphi' \quad \Rightarrow \quad \mathbb{M},y\Vdash (\varphi\vee\varphi')^+.
    \]
    This follows by a standard argument, but for completeness, we include it here. 
    
    First, assume that $y\Vdash (\varphi\lor \varphi')^+$, i.e. $y\Vdash \Sup \varphi^+ \varphi'^+$. Then there are $z,z'$ s.t. $z\Vdash \varphi^+, z'\Vdash \varphi'^+$, and $y=z\curlyvee z'$. By the induction hypothesis, $G(\mathbb{M},z)\vDash \varphi$ and $G(\mathbb{M},z')\vDash \varphi'$, and by (forth), $G(\mathbb{M},y)=G(\mathbb{M},z)\cup G(\mathbb{M},z')$, whence $G(\mathbb{M},y)\vDash \varphi\vee\varphi'$. Conversely, assume that $G(\mathbb{M},y)\vDash \varphi\lor \varphi'$. Then there are teams $s, t$ s.t. $s\vDash \varphi$, $t\vDash \varphi'$, and $G(\mathbb{M},y)=s\cup t$. By (back), there are $z, z'$ s.t. $y=z\curlyvee z'$, $G(\mathbb{M},z)=s$, $G(\mathbb{M},z')=t$, which by the induction hypothesis yields $y=z\curlyvee z'\Vdash \Sup\varphi^+\varphi'^+$, as desired.
\end{proof}

\subsection{Fullness of the translation}

We are left to formally define $F$ to show that $GF(t)=t$, and hence that $G$ is surjective. 

\begin{definition}[The model transformation $F$]\label{def:adjF}
    Define 
    \[
        F:\{t\mid t \text{ is a team}\} \to \{(\mathbb{M},x)\mid (\mathbb{M},x)\text{ is a pointed distr. semilattice model with $x\Vdash \Gamma$}\}
    \]
    by mapping a team $t$ to the frame $(\mathcal{P}(t), \cup)$---which as a powerset is a distributive semilattice---pointed at $t$ and with valuation $V:\mathsf{P}_\pm\to \mathcal{P}\mathcal{P}(t)$ given by
    \begin{align*}
        &V(p_+)\mathrel{:=}\{s\in \mathcal{P}(t)\mid \text{for all $v\in s:v(p)=1$}\}={\downarrow}\{v\in t\mid v(p)=1\},\\
        &V(p_-)\mathrel{:=}\{s\in \mathcal{P}(t)\mid \text{for all $v\in s:v(p)=0$}\}={\downarrow}\{v\in t\mid v(p)=0\}.
    \end{align*}
\end{definition}
To prove that $GF$ is the identity, we need a lemma showing that $\PastH\Sast$ is a fixpoint operator on ideals over powersets; for if not,~\eqref{eq:adjF} (implied by $GF$ being the identity) would fail for literals. 
We prove a slightly more general result and, for convenience, 
phrase it in terms of valuations of proposition letters.
\begin{lemma}\label{lm:closurevalideals}
    Let $(S, \curlyvee, V)$ be a distributive semilattice model and $p_\pm\in \mathsf{P}_\pm$. If $V(p_\pm)$ is an ideal, 
    then for all $x\in S$,
    \[ 
        x\Vdash p_\pm\quad \text{iff} \quad x\Vdash p^\pm
    \]
\end{lemma}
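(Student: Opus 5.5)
Recall $p^\pm=\PastH\Sast p_\pm$. So the claim says: when $V(p_\pm)$ is an ideal (downward closed, closed under binary joins, and, as we read it, non-empty), then $x\Vdash p_\pm$ iff every $y\leq x$ is the join of some non-empty finite set of $p_\pm$-points. We will prove the two directions separately, using only the two ideal properties.

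\emph{Left to right.} Suppose $x\Vdash p_\pm$ and let $y\leq x$. By downward closure of $V(p_\pm)$ we get $y\Vdash p_\pm$. Then the singleton $\{y\}$ is a non-empty finite set of $p_\pm$-points with $\bigcurlyvee\{y\}=y$, so $y\Vdash\Sast p_\pm$. Since $y\leq x$ was arbitrary, $x\Vdash\PastH\Sast p_\pm$, i.e. $x\Vdash p^\pm$.

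\emph{Right to left.} Suppose $x\Vdash p^\pm$. Since $\leq$ is reflexive, instantiating $\PastH$ at $y=x$ gives $x\Vdash\Sast p_\pm$. So $x=\bigcurlyvee\{x_1,\dots,x_n\}$ for some $n\geq 1$ with each $x_i\Vdash p_\pm$. Closure of $V(p_\pm)$ under binary joins, applied by induction on $n$ to $x_1\curlyvee\cdots\curlyvee x_n$, shows that this join lies in $V(p_\pm)$. Non-emptiness of the index set is essential here, which is exactly why $\Sast$ is defined over non-empty finite sets. Hence $x\Vdash p_\pm$.

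The argument is short. The only point needing care is that finite iterated joins are well-defined and coincide with iterated binary joins in a semilattice, which holds by associativity. Distributivity is not needed for this lemma, so we will simply note that the hypothesis is inherited from the ambient setting. This lemma will then feed into the proof that $GF$ is the identity: in $F(t)$ the sets $V(p_\pm)$ are principal ideals, so $p^\pm$ and $p_\pm$ coincide there.
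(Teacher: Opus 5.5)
Your proof is correct and matches the paper's argument: left-to-right uses downward closure of $V(p_\pm)$ plus the singleton witness for $\Sast$, and right-to-left uses reflexivity of $\leq$ together with closure of the ideal under non-empty finite joins. Your side remarks are also accurate: distributivity is not used, and non-emptiness of the ideal need not be assumed, since the witnesses in each direction already supply points of $V(p_\pm)$.
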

\begin{proof}
    If $V(p_\pm)$ is an ideal, then
    \[
        x\Vdash p_\pm \quad \text{iff}\quad x\Vdash\PastH\Sast p_\pm\quad \overset{\text{(def)}}{\text{iff}} \quad x\Vdash p^\pm,
    \]
    where right-to-left of the first `iff' is a consequence of reflexivity, so $x\leq x$, and ideals being closed under (non-empty) finite joins.
\end{proof}

The next lemma then gives the desired by establishing that (i) $F$ is well-defined, i.e. $F(t)\Vdash \Gamma$, and then (ii) for all teams $t$, $t=GF(t)$.
\begin{lemma}[\ref{eq:adjF}]\label{lm:adjF}
    For all teams $t$, $F(t)\Vdash \Gamma$ and $t=GF(t)$.
\end{lemma}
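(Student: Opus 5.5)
The plan is to first pin down the denotations of $p^\pm$ in $F(t)$ and then verify each claim directly.

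\textbf{Literals.} In $F(t)=(\mathcal{P}(t),\cup,V)$, the set $V(p_+)={\downarrow}\{v\in t\mid v(p)=1\}$ is a principal ideal of the powerset semilattice: it is a downset and closed under binary unions. Likewise for $V(p_-)$. Since the powerset is a distributive semilattice, Lemma~\ref{lm:closurevalideals} applies and gives, for every $s\subseteq t$,
\[
s\Vdash p^+ \iff \forall v\in s\,(v(p)=1),\qquad s\Vdash p^- \iff \forall v\in s\,(v(p)=0).
\]
Two consequences follow at once. First, $s\Vdash p^+\land p^-$ iff $s=\varnothing$, because valuations are total. Hence $s\Vdash \NE^-$ iff $s=\varnothing$, and $s\Vdash\NE^+$ iff $s\neq\varnothing$ (using $\mathsf{P}\neq\varnothing$).

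\textbf{$F(t)\Vdash\Gamma$.} For $\PastH(\NE^+\lor\NE^-)$, take any $s\subseteq t$. Either $s$ is empty, so $\NE^-$ holds, or it is nonempty, so $\NE^+$ holds.

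For $\bigwedge_p\Sup p^+p^-$, fix $p$ and split $t=t_1\cup t_0$, where $t_1=\{v\in t\mid v(p)=1\}$ and $t_0=\{v\in t\mid v(p)=0\}$. Then $t_1\Vdash p^+$, $t_0\Vdash p^-$, and $t=t_1\cup t_0$ is their supremum. This holds even when one or both parts are empty, since $\varnothing\Vdash p^\pm$.

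\textbf{$GF(t)=t$.} By definition, $GF(t)$ is the set of $v_s$ for $s\subseteq t$ such that $s$ defines a valuation.

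For $t\subseteq GF(t)$: given $v\in t$, the singleton $\{v\}$ satisfies $p^+$ iff $v(p)=1$ and $p^-$ iff $v(p)=0$, so it defines $v$.

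For $GF(t)\subseteq t$: let $s\subseteq t$ define a valuation $w$. Then $s\neq\varnothing$, since $\varnothing$ satisfies both $p^+$ and $p^-$ and so defines nothing. Pick any $u\in s$. If $w(p)=1$ then $s\Vdash p^+$, so $u(p)=1$. If $w(p)=0$ then $s\Vdash p^-$, so $u(p)=0$. Thus $w=u\in t$.

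\textbf{Main obstacle.} The only delicate step is the first one: checking that the $\PastH\Sast$ prefix does not enlarge the principal ideals. This is exactly the content of Lemma~\ref{lm:closurevalideals}. One should also confirm that the nonempty-join clause of $\Sast$ poses no issue, since joins of nonempty families of subsets of $t_1$ stay inside ${\downarrow}t_1$. Everything after that is routine set bookkeeping.
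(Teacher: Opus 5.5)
Your proof is correct and follows the paper's argument. You use Lemma~\ref{lm:closurevalideals} to identify $p^\pm$ in $F(t)$ with support and antisupport, check $\Gamma$ pointwise on subsets of $t$, and observe that only singletons $\{v\}$ define valuations. The differences are cosmetic: you exhibit the split $t=t_1\cup t_0$ directly rather than citing $t\vDash p\lor\neg p$, and you show every $u\in s$ equals $w$ rather than stating the singleton characterization.
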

\begin{proof}
    See the appendix~(\ref{lm:adjFproof}). 
\end{proof}

Our main theorem now follows.

\begin{theorem}\label{th:translation}
    For all $\varphi, \psi\in \mathsf{Fm}(\mathsf{P})\,{:}\quad\varphi\vDash\psi\quad \text{iff} \quad \Gamma, \varphi^+\Vdash \psi^+.$
\end{theorem}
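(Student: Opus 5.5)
The plan is to assemble the theorem from Lemma~\ref{lm:adjG} and Lemma~\ref{lm:adjF}, following the outline in Section~\ref{subsec:modeltrans}. Throughout, $\Gamma,\varphi^+\Vdash\psi^+$ is read as consequence over pointed distributive semilattice models: every $(\mathbb{M},x)$ with $x\Vdash\Gamma$ and $x\Vdash\varphi^+$ also satisfies $x\Vdash\psi^+$.

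For the left-to-right direction (faithfulness), assume $\varphi\vDash\psi$. Take any pointed distributive semilattice model $(\mathbb{M},x)$ with $x\Vdash\Gamma$ and $x\Vdash\varphi^+$. Since $x\Vdash\Gamma$, the team $G(\mathbb{M},x)$ is defined. Lemma~\ref{lm:adjG} then gives $G(\mathbb{M},x)\vDash\varphi$, so $G(\mathbb{M},x)\vDash\psi$ by assumption. Applying Lemma~\ref{lm:adjG} again, in the other direction, yields $x\Vdash\psi^+$.

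For the right-to-left direction (fullness), assume $\Gamma,\varphi^+\Vdash\psi^+$, and let $t$ be a team with $t\vDash\varphi$. By Lemma~\ref{lm:adjF}, $F(t)$ is a pointed distributive semilattice model satisfying $\Gamma$; it is distributive because powersets under union are. The same lemma gives $GF(t)=t$. Hence $GF(t)\vDash\varphi$, and Lemma~\ref{lm:adjG} applied to $F(t)$ gives $F(t)\Vdash\varphi^+$. The hypothesis then yields $F(t)\Vdash\psi^+$. A further application of Lemma~\ref{lm:adjG} gives $GF(t)\vDash\psi$, that is, $t\vDash\psi$.

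Given the earlier lemmas, the argument is a short composition of them. The real work sits inside Lemma~\ref{lm:adjF}, which we are allowed to assume. Were it not available, the main obstacle would be verifying $F(t)\Vdash\Gamma$ and $GF(t)=t$, and I would handle that step as follows. Since $V(p_\pm)$ are ideals of $\mathcal{P}(t)$, Lemma~\ref{lm:closurevalideals} shows that $p^\pm$ hold at exactly the same points as $p_\pm$. From this one reads off the following facts:
\begin{itemize}
\item $\NE^-$ holds only at $\varnothing$, and $\NE^+$ holds exactly at the non-empty subsets, which gives $\PastH(\NE^+\lor\NE^-)$;
\item $t$ is the union of its $p$-true and $p$-false parts, which gives $\Sup p^+p^-$;
\item the points of $\mathcal{P}(t)$ that define a valuation are exactly the singletons $\{v\}$, with $v_{\{v\}}=v$, which gives $GF(t)=t$.
\end{itemize}
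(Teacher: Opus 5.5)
Your proposal is correct and follows the paper's own proof. Faithfulness comes from Lemma~\ref{lm:adjG} applied twice at $(\mathbb{M},x)$, and fullness from $GF(t)=t$ (Lemma~\ref{lm:adjF}) combined with Lemma~\ref{lm:adjG} at $F(t)$, which is exactly the essential-surjectivity argument of Section~\ref{subsec:modeltrans}; your sketch of Lemma~\ref{lm:adjF} also matches the appendix proof, with the small note that ``$\NE^-$ holds only at $\varnothing$'' relies on $\mathsf{P}$ being non-empty, as the paper assumes.
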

\begin{proof}
    By Lemma~\ref{lm:adjG}, we obtain
    \begin{align*}
    \varphi\vDash\psi\qquad \Rightarrow \qquad \Gamma, \varphi^+\Vdash \psi^+,
\end{align*}
    and by Lemma~\ref{lm:adjF}, the converse
    \begin{align*}
    \varphi\vDash\psi\qquad \Leftarrow \qquad \Gamma, \varphi^+\Vdash \psi^+
\end{align*}
(recall, respectively, the second and third paragraphs of Section~\ref{subsec:modeltrans}).
\end{proof}

\subsection{Translating further team logics}

Our translation from $\mathsf{BSL}$ into modal information logic allows for many variations. In the background here is the fact that models and meanings for a logical system (whether classical modal or hyperintensional) come about by fixing three parameters: the \textit{frames}, the \textit{valuations}, and the \textit{clauses} for the connectives. It may even be said that the second (with the first) give the meaning of proposition letters, and the third (with the first) that of the connectives. In line with this,  the modular set-up given above can be modified in various ways.

\medskip



This flexibility means that our translation also applies to other team logics. It can be extended to include the inquisitive disjunction $\vvee$ and implication $\to$, as well as the Boolean negation ${\sim}$. This way, we obtain a faithful translation of inquisitive logic and of $\mathsf{PL}({\sim})$, along with propositional dependence logic, independence logic, inclusion logic, and exclusion logic.\footnote{For more on inquisitive logic, see~(\cite{CiardelliGroenendijkRoelofsen2018, Ciardellibook2022}); and for more on $\mathsf{PL}({\sim})$ and on propositional dependence-style logics, see e.g.~(\cite{YangVäänänen2016, YangVäänänen2017,Yang2022}).}

We now consider different grammars over our fixed finite set of proposition letters $\mathsf{P}$.

\vspace{-2ex}

\paragraph{Inquisitive logic} The inquisitive propositional grammar is given by
\[
    \varphi\mathrel{::=} \bot\hspace{0.1cm}|\hspace{0.1cm} p \hspace{0.1cm}| \hspace{0.1cm} \varphi\land\varphi\hspace{0.1cm}|\hspace{0.1cm}\varphi\to\varphi\hspace{0.1cm}|\hspace{0.1cm}\varphi\vvee\varphi
\]
As with $\mathsf{BSL}$, inquisitive formulas are evaluated at sets of valuations $t\subseteq \{v\mid v:\mathsf{P}\to\{0,1\}\}$, but according to the following unilateral clauses:
\begin{align*}
        &t\vDash p && \text{iff} && \text{for all } v\in t, v(p)=1\\
        &t\vDash \bot && \text{iff} && t=\varnothing\\
        &t\vDash \varphi\land\psi && \text{iff} && t\vDash \varphi \text{ and } t\vDash \psi\\
        &t\vDash \varphi\to\psi && \text{iff} && \text{for all } t'\subseteq t, \text{ if }t'\vDash \varphi\text{ then }t'\vDash \psi\\
        &t\vDash \varphi\vvee\psi && \text{iff} && t\vDash \varphi \text{ or } t\vDash \psi
\end{align*}
The previous translation $\cdot^+$ is extended to $\vvee$ and $\to$ by setting
\begin{align*}
    (\varphi\vvee \psi)^+&\mathrel{:=}\varphi^+\vee\psi^+\\
    (\varphi\to \psi)^+&\mathrel{:=} \PastH(\varphi^+\to \psi^+)
\end{align*}
It is  easy  to check that the proofs of the two directions of Theorem~\ref{th:translation}, along with their auxiliary lemmas, also go through in the inquisitive setting. Thus, for all inquisitive formulas $\varphi, \psi$,
\begin{align*}
    \varphi\vDash\psi\qquad \text{iff} \qquad \Gamma, \varphi^+\Vdash \psi^+
\end{align*}


\paragraph{Propositional team logic with Boolean negation ($\mathsf{PL}({\sim})$)} As is standard in the team-semantical literature, the grammar of $\mathsf{PL}({\sim})$ is presented in negation-normal form as follows.
\[
    \varphi\mathrel{::=}  p \hspace{0.1cm}| \hspace{0.1cm} \neg p \hspace{0.1cm}| \hspace{0.1cm} \varphi\land\varphi\hspace{0.1cm}|\hspace{0.1cm}\varphi\lor\varphi\hspace{0.1cm}|\hspace{0.1cm} {\sim}\varphi.
\]

The clauses are unilateral, and those for $p, \neg p, \varphi\land\varphi, \varphi\lor\varphi$ are as in $\mathsf{BSL}$, while ${\sim}$ is now classical Boolean negation:
\begin{align*}
        t\vDash {\sim}\varphi \qquad \text{iff} \qquad t\nvDash \varphi.
\end{align*}

We extend $\cdot^+$ by setting
\begin{align*}
    ({\sim}\varphi)^+&\mathrel{:=}{\sim} \varphi^+.
\end{align*}


Once again, it is readily verified that Theorem~\ref{th:translation} still holds. Thus, we get that for all formulas $\varphi, \psi$ in the language of Propositional Team Logic with Boolean negation,
\begin{align*}
    \varphi\vDash\psi\qquad \text{iff} \qquad \Gamma, \varphi^+\Vdash \psi^+.
\end{align*}


\paragraph{More propositional team logics} It is now a small step to see that the translation extends across numerous propositional team logics, including those contained in the following grammar where various dependency atoms are included:
\[
    \varphi\mathrel{::=} \bot\hspace{0.1cm}|\hspace{0.1cm} \NE\hspace{0.1cm}|\hspace{0.1cm}p \hspace{0.1cm}| \hspace{0.1cm} \neg p\hspace{0.1cm}|\hspace{0.1cm}\varphi\lor\varphi\hspace{0.1cm}|\hspace{0.1cm}\varphi\land\varphi\hspace{0.1cm}|\hspace{0.1cm}\blacklozenge \varphi\hspace{0.1cm}|\hspace{0.1cm}\varphi\vvee\varphi\hspace{0.1cm}|\hspace{0.1cm}\varphi\to\varphi\hspace{0.1cm}|\hspace{0.1cm}{\sim}\varphi\hspace{0.1cm}|\hspace{0.1cm}{=}(\vec{\alpha};\vec{\alpha})\hspace{0.1cm}|\hspace{0.1cm}\vec{\alpha}\bot_{\vec{\alpha}}\vec{\alpha}\hspace{0.1cm}|\hspace{0.1cm}\vec{\alpha}\subseteq\vec{\alpha}\hspace{0.1cm}|\hspace{0.1cm}\vec{\alpha}|\vec{\alpha}\hspace{0.1cm}|\hspace{0.1cm}\vec{\alpha}\Upsilon\vec{\alpha},
\]
with $\alpha$ free from the new connectives and dependency atoms $\vvee, {\sim}, {=}(\cdot, \cdot), \hdots$ (and $\blacklozenge$ and $\NE$).
%

All these atoms are definable in the team language with Boolean negation, ${\sim}$ (cf.~\cite[page~8]{LueckVilander19:LMCS}), and so they can be translated accordingly.

It  follows that for all formulas $\varphi, \psi$ in this extended language,
\begin{align*}
    \varphi\vDash\psi\qquad \text{iff} \qquad \Gamma, \varphi^+\Vdash \psi^+
\end{align*}

\noindent{\bf On which models do these translations work?}   With these translations established, one may wonder over which model classes for these logics our translation will continue to work. We just make one observation to this effect. As the proof of the right-to-left direction of Theorem~\ref{th:translation} provides a powerset frame and powerset frames are distributive semilattices, it follows that the translation is full and faithful both when the modal frames are powersets and when they are distributive semilattices. Thus, also for any class of frames in between, including distributive semilattices with a least element, distributive lattices, Heyting algebras, or Boolean algebras.\footnote{If, however, we give up the requirement of distributivity, and consider, e.g., arbitrary semilattices, then the translation is no longer faithful: $p^\pm$ need not be an ideal, as witnessed by the diamond lattice $M_3=\{0,x,y,z,1\}$ when setting $V(p_+)=\{0,x,y\}$. Because then $V(\Sast p_+)=\{0,x,y,1\}$ and $V(p^+)=V(\PastH\Sast p_+)=\{0,x,y\}$, so $V(p^+)$ is not closed under joins, which means that, e.g., translated $\lor$-idempotence ($\Gamma,\Sup p^+ p^+\Vdash p^+$) can fail.}

\begin{remark}[Translating modal extensions] Our translation addressed the 'propositional engine' underlying logics like $\mathsf{BSL}$ or Fine's truthmaker logic. The modalities employed here can be viewed as a sort of `inner modalities' driving reasoning with propositional core notions such as conjunction and disjunction. But in the literature, there are also so-called `modal extensions' of these hyperintensional systems. 

For instance, the clauses for an added relational modality $\Diamond$ in $\mathsf{BSL}$ to obtain the full logic $\mathsf{BMSL}$ run as follows (with $M=\{W,R,V\}$ and $R[w]=\{v \in W \mid wRv\}$): 
   \begin{eqnarray*}
M,  s \vDash   \Diamond  \varphi   &\mbox{ iff }&    {\mbox{for all }   w\in   s:   \exists t \subseteq R[w]: t \neq \varnothing  \ \& \ M,t \vDash \varphi}  \\ 
M, s \Dashv   \Diamond  \varphi   &\mbox{ iff }&   \mbox{for all }  w \in  s:   M, R[w]  \Dashv \varphi      
   \end{eqnarray*}

These 'outer modalities' model explictly modal notions with one of their classical interpretations: alethic, epistemic, deontic, and so on. 

We believe that our translation extends to this case, now  by translating into a modal logic with one additional modality. We will not go further into this matter here.
\end{remark}

\section{Repercussions}\label{sec:repercussions}

\paragraph{Hyperintensional logics} Our translation suggests various issues concerning the hyperintensional logics that we have been looking at. 

\vspace{1ex}

For instance, we isolated the structural features of powersets needed for `team-semantical reasoning': we go from powersets to distributive semilattices. This suggests that the translation may work as a gateway to abstract (generalized) team semantics, as advocated by \textcite{BezhanishviliYang2024}. 

\newpage
Another insight emanating from our analysis concerns computational complexity. Since all the team logics considered here are decidable, so are their translated images, which are fragments of the target modal logic(s). This is intriguing, as the target modal logic(s) -- be it over distributive semilattices, distributive lattices, Heyting algebras, Boolean algebras or powersets -- are undecidable. This follows from~\textcite{Knudstorp2026b}, as there undecidability is established already in the language without $\Sast$. 
We have already discussed in Section~\ref{subsec:comparison} what this suggests for studying the complexity-raising role of classical negation, or for a view of hyperintensional logics as positive fragments of classical modal logics.

\vspace{1ex}

The setting of modal information logics may also suggest the design of richer hyperintensional logics. For instance, one basic feature of modal logics like $\mathsf{MIL}$ is that the existential `merge modality' distributes over classical disjunctions in both its arguments. Given that disjunction has been interpreted differently in many logics, classical, team-based, or with `eventual choice' in possibility semantics (see~\cite{Holliday2021}), it might be of interest to create one logic where all these variants live and their interactions can be studied. The classical modal distribution law just mentioned would then say that, for instance, a $\mathsf{BSL}$ disjunction distributes over classical disjunctions -- but this will not hold the other way around.

\vspace{1ex}
One might also say that, by placing a whole family of hyperintensional logics inside the same classical modal system, we are making these logics more comparable among themselves. In our particular case, preorders and semilattices are widely used models for information structure, so all translated hyperintensional logics now live on the same models, but they differ in which features of propositions in these models they choose to focus on.\footnote{Other ways of doing this include the unified `state-support semantics' in~\cite{MassasPuncocharForthcoming}.}


\vspace{-2ex}

\paragraph{Classical modal logic} On the classical  side, we found the need for two extended systems that do not seem to have been studied so far: modal information logic with a $\mu$-calculus extension $\mu\mathsf{MIL}$ and modal $\mathsf{WSOL}$. Both systems raise interesting issues about different ways of extending modal logics into the infinite, independently from the particular use made of them here.

The fixed-point logic $\mu\mathsf{MIL}$, or its `continuous fragment' with fixed-points that always stabilize by stage $\omega$ extend known modal fixed-point logics in natural ways. Do the core features of the modal $\mu$-calculus carry over here, i.e., decidability and axiomatizability essentially by just adding the principles defining pre-fixed-points? Can we prove analogues of famous semantic characterizations such as the Janin-Walukiewicz Theorem?\footnote{Given our translation, a system of further interest would be, not the minimal one over ternary relations, but over semilattices. In that case the logic is undecidable, as we noted, but this would still be a natural specialization.}

The alternative weak second-order version could be considered a natural modal fragment of the full system $\mathsf{WSOL}$ which is defined over the complete first-order language. Surprisingly little is known about the latter infinitary logic, except for some obvious features like invariance under potential isomorphisms. Its modal fragments (one could think of allowing arbitrary guarding, with the condition for the binary merge modality as a special case) also seem natural. To mention just one question, is guarded $\mathsf{WSOL}$ decidable?

\vspace{-2ex}

\paragraph{General features of translations} What drives our new translation is formalization of hyperintensional truth conditions in modal terms (like Gödel did in his analysis of intuitionistic logic) plus taking care of atoms by special assumptions. 
This leaves room for many variations. Translation of logical operations gives a core connection, but translations also suggest freedom of system design in the other parameters such as the treatment of proposition letters or the auxiliary assumptions.\footnote{The compositional  `logical operator transform' at the heart of a translation can also be studied by itself. For instance, \textcite{Benthem1986} determines which sublogic of $\mathsf{S4}$ arises under the $G$-translation of intuitionistic formulas if we just send proposition letters to themselves, allowing for some non-persistent behavior.} 

\vspace{1ex}

Also, with a translation in hand, there is always the natural question whether it can be converted. We already noted that, in the opposite direction of the Gödel translation, there is also a faithful translation of modal $\mathsf{S4}$ into intuitionistic logic. Are there also such converse translations from classical modal logics into hyperintensional logics? Here the complexity difference noted earlier (undecidable versus decidable) seems an insuperable barrier.

\vspace{1ex}

One can also view our new translation as one more case study in the general area of translations, as surveyed in~(\cite{French2017}). In particular, while our focus was on connecting specific hyperintensional and classical logics, one could also shift the perspective as follows. Viewed abstractly, translation is really a \emph{ternary relation} 

\vspace{-2ex}

$$L_1, \,\, \tau, \, \, L_2$$ 

\vspace{-1ex}

\noindent where $L_1$ is a source logic, $L_2$ a target logic, and $\tau$ some syntactic translation. In this scheme one can fix any of the three arguments and ask for the possible variation in the remaining ones. For instance, there might be other translations, different from ours, running from $\mathsf{BSL}$ into $\mu\mathsf{MIL}$. Or, fixing our translation, there can be many more source and target logics connected by it, perhaps on much weaker propositional bases, a phenomenon already noted in connection with the Gödel translation in~(\cite{HollidayMassas2026}).

\medskip
One further general topic not addressed in this paper is just  which technical properties of logical systems are transferred by translations. For some discussion of this issue see~(\cite{Benthem2026}). For instance, while decidability and axiomatizability were in the background in the above, what about more subtle features such as interpolation theorems expressing some sort of expressive completeness of the chosen language for a logical system?

\medskip	

\noindent{\bf Other methodologies for studying translations} Finally, our analysis of connections between hyperintensional and classical logics was semantic, and in fact model-theoretic, in line with their usual style of presentation. But there are  broad alternatives. One could study connections between our systems \emph{algebraically} using tools from Universal Algebra. In fact our use of Birkhoff's representation for finite distributive lattices already went some way in this direction. A further style of analysis might be \emph{proof-theoretic}, concentrating more on connecting the natural proof structure of hyperintensional and classical modal logics.\vspace{-2ex}

\section{Summary}\label{sec:summary}	
We have presented a new translation from team-based hyperintensional logics to classical modal ones extending an earlier `bridge head' connecting truthmaker logics to modal information logics. Of course, this is not an issue of `reduction' or making one side in the contact redundant. As we saw, our translation makes systems from the two sides more comparable, and it also suggests interesting new questions about their design. More generally, this paper can be seen as an exercise in finding what connects us, rather than what divides us, in the field of logic.

\vspace{-1ex}

\paragraph{Acknowledgements.} Part of this research was supported by the NWO OC project \textit{Nothing is Logical}  (grant no 406.21.CTW.023). We would also like to thank the audiences of a MCMP colloquium and a Tsinghua seminar for feedback on versions of this material.

\appendix
\section{Appendix}
\subsection{Four auxiliary lemmas for faithfulness}
We prove 
the four auxiliary Lemmas~\ref{lm:ideals}--\ref{lm:joinofvals}.

\begin{lemma}[Restatement of Lemma~\ref{lm:ideals}, with proof]\label{lm:idealsproof}
        Let $(S, \curlyvee, V)$ be a distributive semilattice model. Then $V(p^\pm)$ is closed downwards and under binary joins.
\end{lemma}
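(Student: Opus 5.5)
Write $A:=V(p_\pm)$ and $B:=V(\Sast p_\pm)$. So $B$ is the set of joins of non-empty finite subsets of $A$, and $V(p^\pm)=V(\PastH B)=\{x\mid {\downarrow}x\subseteq B\}$. Recall that $\PastH$ is the universal downward modality, dual to $\Past=\Sup\cdot\top$, so $x\Vdash\PastH\chi$ iff every $y\le x$ satisfies $\chi$.

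\textbf{Downward closure.} This is immediate from transitivity of $\le$. If $x\Vdash\PastH\chi$ and $x'\le x$, then every $y\le x'$ also satisfies $y\le x$. Hence $y\Vdash\chi$, and so $x'\Vdash\PastH\chi$. No property of $\Sast$ is needed here.

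\textbf{Closure under binary joins.} Let $x,y\in V(p^\pm)$; we must show ${\downarrow}(x\curlyvee y)\subseteq B$. Take any $w\le x\curlyvee y$. By distributivity of the semilattice there are $x'\le x$ and $y'\le y$ with $w=x'\curlyvee y'$. Since ${\downarrow}x\subseteq B$ and ${\downarrow}y\subseteq B$, we get $x'\in B$ and $y'\in B$. So $x'=\bigcurlyvee S_1$ and $y'=\bigcurlyvee S_2$ for some non-empty finite $S_1,S_2\subseteq A$. Then $S_1\cup S_2$ is a non-empty finite subset of $A$, and by associativity and commutativity of $\curlyvee$ its join is $x'\curlyvee y'=w$. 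Thus $w\in B$, as required.

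\textbf{Main obstacle.} The only substantive point is the use of distributivity to split an arbitrary element below $x\curlyvee y$ into pieces below $x$ and below $y$. Footnote example $M_3$ shows this step genuinely fails without distributivity. The remaining detail, that $B$ is closed under binary joins, is simply concatenation of finite witness sets.
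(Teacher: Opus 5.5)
Your proposal is correct and follows essentially the same route as the paper's proof. Downward closure comes from transitivity, and binary-join closure comes from using distributivity to write $w\le x\curlyvee y$ as $x'\curlyvee y'$ with $x'\le x$ and $y'\le y$, then concatenating the finite $p_\pm$-witness sets for $x'$ and $y'$.
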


\begin{proof}
    By transitivity of $\leq$, $V(p^\pm)=V(\PastH\Sast p_\pm)$ is downward closed. To see that it is closed under binary joins, assume $t\Vdash p^\pm$ and $u\Vdash p^\pm$, and let $x\leq t\curlyvee u$. We show that $x\Vdash \Sast p_\pm$. By distributivity, there are $t'\leq t$ and $u'\leq u$ such that $x=t'\curlyvee u'$. Since $t'\leq t\Vdash p^\pm$, we have $t'\Vdash \Sast p_\pm$, so there are $t'_1, \hdots, t'_n$ such that $t'=t_1'\curlyvee \cdots \curlyvee t_n'$ and $t_i'\Vdash p_\pm$ for all $i\in\{1, \hdots, n\}$. Likewise, there are $u'_1, \hdots, u'_m$ such that $u'=u_1'\curlyvee \cdots \curlyvee u_m'$ and $u_j'\Vdash p_\pm$ for all $j\in\{1, \hdots, m\}$. But then $x=t'\curlyvee u'=t_1'\curlyvee \cdots \curlyvee t_n'\curlyvee u_1'\curlyvee \cdots \curlyvee u_m'$, so $x\Vdash \Sast p_\pm$. As $t,u,$ and $x$ were arbitrary, this proves that $V(p^\pm)$ is closed under binary joins.
\end{proof}
\begin{lemma}[Restatement of Lemma~\ref{lm:non-emptyideal}, with proof]
\label{lm:non-emptyidealproof}
    Let $(\mathbb{M}, x)$ be a pointed distributive semilattice model with $x\Vdash\Gamma$. Then $x\Vdash \PastH\Past \NE^-$, and if $y\leq x$ then $y\Vdash \Gamma$.
\end{lemma}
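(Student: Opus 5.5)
The plan is to prove the second claim first, since it is purely structural and will also be reused. Let $y\leq x$; we must show $y\Vdash \PastH(\NE^+\lor\NE^-)$ and $y\Vdash \bigwedge_{p}\Sup p^+p^-$. The first is immediate: $\PastH$ is a universal downward modality over a transitive order, so any $z\leq y$ satisfies $z\leq x$, hence $z\Vdash \NE^+\lor\NE^-$. For the second, fix $p$. From $x\Vdash \Sup p^+p^-$ there are $a,b$ with $x=a\curlyvee b$, $a\Vdash p^+$ and $b\Vdash p^-$. Since $y\leq a\curlyvee b$, distributivity yields $a'\leq a$ and $b'\leq b$ with $y=a'\curlyvee b'$. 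By Lemma~\ref{lm:ideals}, $V(p^+)$ and $V(p^-)$ are downward closed, so $a'\Vdash p^+$ and $b'\Vdash p^-$. Hence $y\Vdash \Sup p^+p^-$, and therefore $y\Vdash\Gamma$.

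For the first claim, $x\Vdash \PastH\Past\NE^-$, I take an arbitrary $y\leq x$ and look for some $z\leq y$ with $z\Vdash p^+\land p^-$ for every $p\in\mathsf{P}$. By the previous paragraph $y\Vdash\Gamma$, so for each $p$ we can split $y=a_p\curlyvee b_p$ with $a_p\Vdash p^+$ and $b_p\Vdash p^-$. The key idea is to meet these down to something lying in every $V(p^+)\cap V(p^-)$. For a single $p$: since $a_p\leq y$, distributivity applied to $a_p\leq a_p\curlyvee b_p$ does not immediately give a common lower bound. Instead, note that $V(p^\pm)=V(\PastH\Sast p_\pm)$, so $a_p\Vdash \Sast p_+$ supplies points of $V(p_+)$ below $a_p$. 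However, what we actually need is a point below both an element of $V(p^+)$ and an element of $V(p^-)$, and semilattices need not have meets.

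So the main obstacle is producing a "bottom-like" point without meets. I would try the following. Apply distributivity to $a_p\leq y= a_p\curlyvee b_p$ viewed differently: consider $b_p\leq y$. Then $b_p\leq a_p\curlyvee b_p$ gives nothing new, so instead iterate over the letters. Write $\mathsf{P}=\{p_1,\dots,p_n\}$ and construct a descending chain $y=y_0\geq y_1\geq\cdots\geq y_n$ with $y_k\Vdash p_i^+\land p_i^-$ for all $i\leq k$, or alternatively $y_k$ lying in one of the ideals while being forced into the other. At step $k$, $y_{k-1}\Vdash\Gamma$ gives $y_{k-1}=a\curlyvee b$ with $a\Vdash p_k^+$ and $b\Vdash p_k^-$. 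Now $a\leq y_{k-1}$, so $a$ inherits all $p_i^\pm$ for $i<k$ by downward closure, and $a\Vdash\Gamma$. Decomposing $a$ again with respect to $p_k$ gives $a=a'\curlyvee b'$ with $b'\Vdash p_k^-$ and $b'\leq a$, so $b'\Vdash p_k^+$ as well. Setting $y_k:=b'$ gives $y_k\Vdash p_k^+\land p_k^-$, while the earlier letters are preserved by downward closure. At the end, $y_n\Vdash \NE^-$ and $y_n\leq y$, so $y\Vdash \Past\NE^-$. Since $y\leq x$ was arbitrary, $x\Vdash\PastH\Past\NE^-$.

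The crucial trick, and the step to double-check, is this second decomposition of $a$: the $p^-$-component of any point already in $V(p^+)$ lies in both ideals. Only downward closure from Lemma~\ref{lm:ideals} and the downward persistence of $\Gamma$ are used, together with finiteness of $\mathsf{P}$.
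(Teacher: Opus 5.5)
Your proof is correct. The second claim, that $\Gamma$ persists downward, is argued exactly as in the paper: transitivity handles $\PastH(\NE^+\lor\NE^-)$, and distributivity plus downward closure of $V(p^\pm)$ handles $\Sup p^+p^-$.

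For the first claim the paper only says it is ``readily seen''. Your key observation is what is needed there: decompose a point that already lies in $V(p^+)$ via $\Sup p^+p^-$, and its $p^-$-component falls in $V(p^+)\cap V(p^-)$ by downward closure.

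Given the full $\Gamma$, though, the letter-by-letter chain is more than you need. Once you have $b'\le y\le x$ with $b'\Vdash p_1^+\land p_1^-$, you get $b'\nVdash\NE^+$. Since $x\Vdash\PastH(\NE^+\lor\NE^-)$, also $b'\Vdash\NE^+\lor\NE^-$, which forces $b'\Vdash\NE^-$ immediately.

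What your iteration buys is independence from that first conjunct of $\Gamma$. It shows that on distributive semilattices $\bigwedge_{p}\Sup p^+p^-$ alone yields $\PastH\Past\NE^-$. This is exactly the ``double duty'' the paper asserts in its footnote on the $\mathsf{K3}$ variant.

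In a final write-up, drop the exploratory false starts in your third paragraph and present just the chain, or the one-step shortcut.
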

\begin{proof}
    For the latter, by the preceding lemma, $V(p^+)$ and $V(p^-)$ are downward closed, hence by distributivity of the semilattice, so is $V(\Sup p^+p^-)$ (and therefore $V(\Gamma)$ too). 

    For the former, it is then readily seen that for all $y\leq x$ there is $z\leq y$ with $z\Vdash \bigwedge_{p\in \mathsf{P}}p^+\land p^-$, i.e. $x\Vdash \PastH\Past \NE^-$, as desired.
\end{proof}
\begin{lemma}[Restatement of Lemma~\ref{lm:constructionv}, with proof]\label{lm:constructionvproof}
    Let $(\mathbb{M}, x)$ be a pointed distributive semilattice model with $x\Vdash \Gamma$. Then for all $p\in\mathsf{P}$:\vspace{-0.7ex}
    \[
        \text{If } x\nVdash p^+ \text{ and }x\nVdash p^-, \text{ then there are $x^+\curlyvee x^-= s$ such that $x^\pm\Vdash p^\pm$ and $x^\pm\nVdash p^\mp$.}
    \]
\end{lemma}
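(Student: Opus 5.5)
The target is a decomposition $x = x^+\curlyvee x^-$ with $x^+\Vdash p^+$, $x^+\nVdash p^-$, $x^-\Vdash p^-$ and $x^-\nVdash p^+$. (The `$s$' in the statement is presumably $x$.) The plan is to obtain the decomposition from the excluded-middle conjunct of $\Gamma$, and then to use the failures at $x$ together with the ideal properties of Lemma~\ref{lm:ideals} to rule out the unwanted memberships.

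\textbf{Step 1 (the decomposition).} Since $x\Vdash\Gamma$, we have $x\Vdash \Sup p^+p^-$. This yields points $x^+, x^-$ with $x=x^+\curlyvee x^-$, $x^+\Vdash p^+$ and $x^-\Vdash p^-$. So the two positive requirements come for free.

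\textbf{Step 2 (the negative requirements).} Suppose for contradiction that $x^+\Vdash p^-$. Together with $x^-\Vdash p^-$ and closure of $V(p^-)$ under binary joins (Lemma~\ref{lm:ideals}), this gives $x=x^+\curlyvee x^-\Vdash p^-$, contradicting the hypothesis $x\nVdash p^-$. Symmetrically, if $x^-\Vdash p^+$, then join-closure of $V(p^+)$ combined with $x^+\Vdash p^+$ gives $x\Vdash p^+$, again a contradiction. Hence $x^+\nVdash p^-$ and $x^-\nVdash p^+$.

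\textbf{Where care is needed.} The one subtle point is to apply join-closure to the same literal in each case: pair $p^-$ at $x^+$ with $p^-$ at $x^-$, rather than trying to use downward closure. That choice is what makes both contradictions go through. Everything else is immediate from $\Gamma$ and Lemma~\ref{lm:ideals}; distributivity enters only through that lemma. In particular, the conjunct $\PastH(\NE^+\lor\NE^-)$ of $\Gamma$ is not needed here.
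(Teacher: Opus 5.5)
Your proposal is correct and follows the paper's proof essentially verbatim. You obtain the decomposition from the conjunct $\Sup p^+p^-$ of $\Gamma$, then rule out $x^+\Vdash p^-$ and $x^-\Vdash p^+$ by join-closure of $V(p^-)$ and $V(p^+)$ respectively (Lemma~\ref{lm:ideals}); you are also right that the $s$ in the statement should read $x$.
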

\begin{proof}
    Assume $x\nVdash p^+ \text{ and }x\nVdash p^-$ for some $p\in\mathsf{P}$. 
     Since $x\Vdash \Gamma$, we have $x\Vdash \Sup p^+p^-$. 
     So there are $x^+,x^-$ s.t. $x^+\Vdash p^+, x^-\Vdash p^-$, and $x=x^+\curlyvee x^-$. Moreover, $x^+\nVdash p^-$, since had it been that $x^+\Vdash p^-$, then because $V(p^-)$ is closed under binary joins (cf. Lemma~\ref{lm:ideals}), we would have $x=x^+\curlyvee x^-\Vdash p^-$ -- contradiction. Similarly, $x^-\nVdash p^+$.
\end{proof}
\begin{lemma}[Restatement of Lemma~\ref{lm:joinofvals}, with proof]\label{lm:joinofvalsproof}
    Let $(\mathbb{M}, x)$ be a pointed distributive semilattice model with $x\Vdash \Gamma$. If $x\Vdash \NE^+$, then there is a non-empty finite set of points $S_x$ such that \vspace{-.5ex}
    \begin{itemize}
        \item $x=\bigcurlyvee S_x$,
        \item each $y\in S_x$ defines a valuation, i.e. $\forall p\in\mathsf{P}:y\Vdash p^+ \text{ iff } y\nVdash p^-$,
        \item $G(\mathbb{M}, x)=\{v_{y}\mid y\in S_x\}$.
    \end{itemize}
    Moreover, whenever $S_x'$ is a non-empty finite set satisfying the first two items, it also satisfies the third.
\end{lemma}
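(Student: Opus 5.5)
Existence and uniqueness are handled separately, with existence obtained by repeatedly splitting $x$ using Lemma~\ref{lm:constructionv}. Start with $S=\{x\}$ and process the finitely many letters $p\in\mathsf{P}$ one at a time. We maintain the invariant that $S$ is a non-empty finite set with $\bigcurlyvee S=x$, and that every $y\in S$ satisfies $y\leq x$ (hence $y\Vdash\Gamma$ by Lemma~\ref{lm:non-emptyideal}) and $y\Vdash\NE^+$.

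Fix the current letter $p$ and take $y\in S$. By $\NE^+$, $y$ does not force both $p^+$ and $p^-$. If $y$ forces neither, Lemma~\ref{lm:constructionv} splits it as $y=y^+\curlyvee y^-$, where $y^\pm\Vdash p^\pm$ and $y^\pm\nVdash p^\mp$. Replace $y$ by $y^+,y^-$; the join is unchanged. Letters already decided stay decided by downward closure (Lemma~\ref{lm:ideals}), provided $\NE^+$ holds for the new pieces. This needs care: could $y^+\Vdash q^+\land q^-$ for some $q$? Since $y^+\Vdash \NE^+\lor\NE^-$ by $\Gamma$, it suffices to exclude $y^+\Vdash\NE^-$. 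But $\NE^-$ includes $p^-$, and $y^+\nVdash p^-$, so $y^+\Vdash\NE^+$. The same holds for $y^-$.

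Once every letter has been processed, each $y\in S$ decides every $p$, so $y$ defines a valuation. Set $S_x:=S$.

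For uniqueness (the ``moreover'' clause), let $S'$ be a non-empty finite set with $\bigcurlyvee S'=x$ whose elements define valuations. We show $G(\mathbb{M},x)=\{v_y\mid y\in S'\}$.

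The inclusion $\supseteq$ is immediate, since every $y\in S'$ lies below $x$. For $\subseteq$, let $z\leq x$ define a valuation $v_z$. By distributivity, applied inductively over the finite join, write $z=\bigcurlyvee_{y\in S'} z_y$ with $z_y\leq y$.

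\textbf{Main obstacle.} We must find some $y\in S'$ with $v_y=v_z$. Note that $z\Vdash\NE^+$: it defines a valuation, so it does not force $p^+\land p^-$. Hence not all $z_y$ force $\NE^-$, since $V(\NE^-)$ is join-closed. Choose $y$ with $z_y\nVdash\NE^-$, so $z_y\Vdash\NE^+$ by $\Gamma$.

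Now compare $v_y$ and $v_z$ at each $p$. If $y\Vdash p^+$, then $z_y\Vdash p^+$ by downward closure, and $z_y\nVdash p^-$ because $z_y\Vdash\NE^+$. However, $z$ itself might force $p^-$ while only some other component forces $p^+$, so the choice of $y$ must be uniform across letters. The fix is to choose $y$ by testing the valuations: if $v_z\neq v_y$ for every $y$ with $z_y\Vdash\NE^+$, then for each such $y$ there is a letter $p_y$ with $v_y(p_y)\neq v_z(p_y)$.

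Then $z_y$ forces the literal opposite to $v_z$ at $p_y$, and does not force $v_z$'s literal. Meanwhile $z$, being below $\bigcurlyvee z_y$, does force $v_z$'s literal, and the components with $z_y\Vdash\NE^-$ force everything. I would try to derive a contradiction from this using join-closure, perhaps by first passing to $z':=\bigcurlyvee\{z_y\mid z_y\Vdash\NE^+\}\curlyvee(\text{the rest})$. This step, ruling out $z$ being a ``new'' valuation built from mixed components, is where I expect the real work. It is the analogue of join-irreducibles being join-prime in Birkhoff's theorem, and I would aim to show that any $z$ defining a valuation is join-prime relative to $V(\NE^+)$ via Lemma~\ref{lm:constructionv}.
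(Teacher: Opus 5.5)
Your existence argument is correct and matches the paper's: you split letter by letter with Lemma~\ref{lm:constructionv}, and you correctly check that the new pieces $y^\pm$ satisfy $\NE^+$ because they fail $p^\mp$. The gap is in the ``moreover'' part, which also carries the third item for $S_x$. You never show that some $y\in S'$ has $v_y=v_z$. You flag this as the main obstacle and leave it as ``I would try to derive a contradiction\ldots'', possibly via a join-primeness argument using Lemma~\ref{lm:constructionv}. So as written the proof is incomplete. Moreover, the obstacle you describe does not exist: the scenario ``$z$ forces $p^-$ while only some other component forces $p^+$'' cannot occur.

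The observation you are missing is that each component $z_y$ lies below \emph{both} $y$ and $z$, since it is a joinand of $z=\bigcurlyvee_{y\in S'}z_y$. By downward closure of $V(p^\pm)$ (Lemma~\ref{lm:ideals}), $z_y$ forces every literal that $z$ forces and every literal that $y$ forces. Both $z$ and $y$ decide every letter, because each defines a valuation. So if $v_y(p)\neq v_z(p)$ for some $p$, then $z_y\Vdash p^+\land p^-$, contradicting $z_y\Vdash\NE^+$.

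Hence \emph{any} $y$ with $z_y\nVdash\NE^-$ satisfies $v_y=v_z$. You need no uniform choice across letters, no auxiliary point $z'$, and no appeal to Lemma~\ref{lm:constructionv}. This is exactly how the paper closes the argument: it shows the chosen component is $\mathsf{P}$-equivalent to $z$ and, by the same reasoning, to $y$. Your contrapositive sketch already held the contradiction. There you assert that $z_y$ ``does not force $v_z$'s literal'', but that is false, since $z_y\leq z$ and $z$ forces that literal.
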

\begin{proof}
    We first prove existence, and then uniqueness up to the set of valuations defined (the `Moreover'-claim).
    
    For the existence claim, enumerate the proposition letters $\{p_1,\hdots, p_n\}=\mathsf{P}$. 
    By induction, we show that for all $0\leq i\leq n$, there is a finite set $S_i$ of elements such that (a) $\bigcurlyvee S_i=x$ and (b) for every $y\in S_i$, \vspace{-.5ex}
    \[
        \text{for all }1\leq j\leq i{:}\; y\Vdash p_j^+ \text{ iff } y\nVdash p_j^-.
    \]
    This shows that $S_x\mathrel{:=}S_n$ witnesses the first two items of the lemma; the last item will be proven by a subsequent argument.
    \\
    For the induction start, set $S_0\mathrel{:=}\{x\}$. 
    \\
    For the induction step, assume the induction hypothesis holds for some $0\leq i<n$. We show that it holds for $i+1$. Let $y\in S_i$ be arbitrary. It is then enough to show that there is a finite set $S_{i+1, y}$ s.t. $\bigcurlyvee S_{i+1, y}=y$ and for every $z\in S_{i+1, y}$, \vspace{-.5ex}
    \[
        \text{for all }1\leq j\leq i+1{:}\; z\Vdash p_j^+ \text{ iff } z\nVdash p_j^-.
    \]
    \vspace{-3.5ex}
    
    \noindent If $y\Vdash p_{i+1}^+ \text{ or } y\Vdash p_{i+1}^-$, set $S_{i+1, y}\mathrel{:=}\{y\}$. This works by the induction hypothesis: the \textit{exclusive} or for $p_{i+1}$ -- that is, $[y\Vdash p_{i+1}^+ \text{ iff } y\nVdash p_{i+1}^-]$ -- follows from $y\Vdash \NE^+$, which, in turn, follows from $x\Vdash \Gamma$ and $[y\Vdash p_{i}^+ \text{ iff } y\nVdash p_{i}^-]$ (or from $x\Vdash\NE^+$).
    \\
    If not, then employing Lemma~\ref{lm:constructionv}, we find $y^+\curlyvee y^-=y$ with $y^\pm \Vdash p_{i+1}^\pm$ and $y^\pm \nVdash p_{i+1}^\mp$. Thus, setting $S_{i+1, y}\mathrel{:=}\{y^+, y^-\}$ works, as $y^\pm \nVdash p_{i+1}^\mp$ implies $y^\pm \Vdash \NE^+$; and $y^\pm\leq y$ entails, via the induction hypothesis and $y^\pm \Vdash \NE^+$, that $\text{for all }1\leq j\leq i{:}\; y^\pm\Vdash p_j^+ \text{ iff } y^\pm\nVdash p_j^-$. This completes the induction.

    It remains to establish the third item, as well as the subsequent uniqueness claim; since our proof of the former applies verbatim to the latter, we only include explicit proof of the former. Clearly, $G(\mathbb{M}, x)\supseteq \{v_{x'}\mid x'\in S_n\}$. Conversely, suppose $v\in G(\mathbb{M}, x)$. Then, by definition of $G$, there is $y\leq x=\bigcurlyvee S_n$ such that $v_y=v$. Note that $y\nVdash \NE^-$. By distributivity, for each $x'\in S_n$, there is $x'_y\leq x'$ such that
    \[
        y=\bigcurlyvee_{x'\in S_n}x'_y.
    \]
    (The generic case $S_n=\{x\}$ too is handled by the forthcoming type of argument.) Consequently, there must be $x'\in S_n$ such that
    \[
        x'_y\nVdash \NE^-,
    \]
    since $V(\NE^-)$ is 
    closed under (non-empty) finite joins because $V(p^\pm)$ are and $V(\NE^-)=\bigcap_{p\in\mathsf{P}}(V(p^+)\cap V(p^-))$. But then $x'_y\Vdash \NE^+$, so as $x'_y\leq y$, we have that for all $p\in\mathsf{P}$,
    \[
        x'_y\Vdash p^+ \Leftrightarrow y\Vdash p^+ \quad\text{and}\quad x'_y\Vdash p^- \Leftrightarrow y\Vdash p^-.
    \]
    By the same argument, $x'_y\leq x'\in S_n$ implies that
    \[
        x'_y\Vdash p^+ \Leftrightarrow x'\Vdash p^+ \quad\text{and}\quad x'_y\Vdash p^- \Leftrightarrow x'\Vdash p^-.
    \]
    Thus, $x'$ defines the same valuation as $y$, so $v=v_y=v_{x'}\in \{v_{x'}\mid x'\in S_n\}$, as desired.
\end{proof}

\subsection{One auxiliary lemma for fullness}
\begin{lemma}[Restatement of Lemma~\ref{lm:adjF}, with proof]\label{lm:adjFproof}
    For all teams $t$, $F(t)\Vdash \Gamma$ and $t=GF(t)$.
\end{lemma}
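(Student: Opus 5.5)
The first step is to apply Lemma~\ref{lm:closurevalideals} to the model $F(t)=(\mathcal{P}(t),\cup,V)$. By definition, $V(p_+)={\downarrow}\{v\in t\mid v(p)=1\}$ and $V(p_-)={\downarrow}\{v\in t\mid v(p)=0\}$. Each of these is a principal downset in a powerset, so it is an ideal: it is downward closed, and it is closed under binary unions because a union of two subsets of $A$ is again a subset of $A$. Lemma~\ref{lm:closurevalideals} then gives, for every $s\subseteq t$,
\[
s\Vdash p^+ \iff \forall v\in s\,(v(p)=1), \qquad s\Vdash p^- \iff \forall v\in s\,(v(p)=0).
\]
This identification of $p^\pm$ with support and antisupport of $p$ is the key fact. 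Everything after it is bookkeeping about subsets of $t$.

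Next I verify $F(t)\Vdash\Gamma$.

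For $\bigwedge_p\Sup p^+p^-$: given $p$, split $t=t_1\cup t_0$, where $t_1=\{v\in t\mid v(p)=1\}$ and $t_0=\{v\in t\mid v(p)=0\}$. Then $t_1\Vdash p^+$ and $t_0\Vdash p^-$ by the identification, and $t=t_1\curlyvee t_0$. Either part may be empty; the supremum clause does not require non-emptiness, so this is fine.

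For $\PastH(\NE^+\lor\NE^-)$: I show that $\NE^-$ holds exactly at $\varnothing$, and $\NE^+$ exactly at the non-empty $s\subseteq t$.
\begin{itemize}
\item If $s=\varnothing$, then $s\Vdash p^+\land p^-$ vacuously for every $p$, so $s\Vdash\NE^-$.
\item If $s\neq\varnothing$, pick $v\in s$. For any $p$, $v(p)$ is either $1$ or $0$, so $s$ cannot satisfy both $p^+$ and $p^-$. Hence ${\sim}(p^+\land p^-)$ holds at $s$ for every $p$, which is $\NE^+$.
\end{itemize}
This needs $\mathsf{P}\neq\varnothing$ to make $\NE^-$ fail on non-empty $s$, which the paper assumes. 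So every $s\leq t$ satisfies $\NE^+\lor\NE^-$.

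Finally I show $GF(t)=t$.

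For $t\subseteq GF(t)$: take $v\in t$ and the point $\{v\}\leq t$. By the identification, $\{v\}\Vdash p^+$ iff $v(p)=1$, and $\{v\}\Vdash p^-$ iff $v(p)=0$. So $\{v\}$ defines $v$, and $v\in GF(t)$.

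For $GF(t)\subseteq t$: suppose some $s\subseteq t$ defines a valuation $w$. Then $s\neq\varnothing$, since $\varnothing$ satisfies both $p^+$ and $p^-$ and so defines nothing. Take $u\in s$. For each $p$, the valuation $w$ gives either $s\Vdash p^+$ or $s\Vdash p^-$. If $s\Vdash p^+$, every element of $s$ has $p$ true, so $u(p)=1=w(p)$; the case $s\Vdash p^-$ is symmetric. Hence $w=u\in t$. (In fact $s$ is then a singleton, though this is not needed.)

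The only step that is not immediate is the first one, checking that $\PastH\Sast$ does not alter these principal-ideal valuations. Lemma~\ref{lm:closurevalideals} handles it once the ideal property is verified, and that verification is trivial in a powerset.
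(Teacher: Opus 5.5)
Your proof is correct and follows essentially the same route as the paper. In both, Lemma~\ref{lm:closurevalideals} identifies $p^\pm$ with (anti)support on $\mathcal{P}(t)$, and then $\Gamma$ and $GF(t)=t$ are checked directly; the only cosmetic difference is that you show every element of a valuation-defining $s$ equals the defined valuation, where the paper notes that such an $s$ must be a singleton. One small correction: $\mathsf{P}\neq\varnothing$ is really needed in your step ``$\varnothing$ defines no valuation'' (otherwise $GF(\varnothing)\neq\varnothing$), not for $\PastH(\NE^+\lor\NE^-)$, which holds trivially when $\mathsf{P}=\varnothing$ because $\NE^+$ is then the empty conjunction.
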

\begin{proof}
    Recall from Definition~\ref{def:adjF} that $F(t)=((\mathcal{P}(t), \cup,V),t)$. By Lemma~\ref{lm:closurevalideals}, we have that for all $s\subseteq t$ and $p\in \mathsf{P}$,
    \begin{align*}
        s\vDash p\qquad \overset{\text{(def)}}{\text{iff}}\qquad (\mathcal{P}(t), \cup,V),s\Vdash p_+\qquad \text{iff}\qquad (\mathcal{P}(t), \cup,V),s\Vdash p^+,\\
        s\Dashv p\qquad \overset{\text{(def)}}{\text{iff}}\qquad (\mathcal{P}(t), \cup,V),s\Vdash p_-\qquad \text{iff}\qquad (\mathcal{P}(t), \cup,V),s\Vdash p^-.
    \end{align*}
    But then $s\subseteq t$ defines a classical valuation in $(\mathcal{P}(t), \cup,V)$  iff $s=\{v\}$ is a singleton, and in which case it defines $v$. Thus we have $t=GF(t)$, at least provided that $F$ is well-defined to begin with. For well-definedness, we need to show that $F(t)\Vdash \Gamma$, i.e.
    
    \vspace{1mm}
    
    \[
        (\mathcal{P}(t), \cup,V),t\Vdash \PastH(\NE^+\lor \NE^-),\qquad (\mathcal{P}(t), \cup,V),t\Vdash \bigwedge_{p\in \mathsf{P}}\Sup p^+p^-.
    \]
    The latter is a consequence of the above because $t\vDash p\lor \neg p$ and $(p\lor \neg p)^+=\Sup p^+p^-$. For the former, observe first that for all $p\in\mathsf{P}$ and $s\subseteq t$, 
    \[
        s\vDash \NE\quad \text{iff}\quad s\neq\varnothing\quad \text{iff}\quad \text{$s\nvDash p$ or $s\nDashv p$} \quad \text{iff} \quad \text{$s\nVdash p^+$ or $s\nVdash p^-$}\quad \text{iff}\quad s\Vdash {\sim}(p^+\land p^-).
    \]
    Similarly, observe that
    \[
        s\Dashv \NE\quad \text{iff}\quad s=\varnothing\quad \text{iff}\quad \text{$s\vDash p$ and $s\Dashv p$} \quad \text{iff} \quad \text{$s\Vdash p^+$ and $s\Vdash p^-$}\quad \text{iff}\quad s\Vdash (p^+\land p^-).
    \]
    Using this, it therefore follows that $t\Vdash \PastH(\NE^+\lor \NE^-)$, since for all $s\subseteq t$, we have 
    \[  
        s\Vdash \NE^+\lor \NE^-\quad \text{iff}\quad s\Vdash \NE^+ \text{ or }s\Vdash \NE^-\quad \text{iff}\quad s\vDash \NE\text{ or }s\Dashv \NE\quad \text{iff}\quad s\neq\varnothing\text{ or }s=\varnothing.
    \]
\end{proof}

\vspace{-2.5ex}

\subsection{Excursion: $G$ and Birkhoff's representation theorem}\label{app:Birkhoff} 

In the main text, we indicated how our map $G$ resembles Birkhoff's map. 
To bring this out more clearly, consider the $\mathsf{K3}$-version of $\mathsf{BSL}$, where teams are instead sets of \emph{partial} functions $v:\mathsf{P}\to\{0,1\}$. As noted earlier, with minor tweaks to $\Gamma$, our translation goes through and an analogue of Theorem~\ref{th:translation} obtains. The proof remains largely the same, but with one notable subtlety: $(\mathbb{M},x)$ is not mapped to the set of \textit{all} partial valuations defined by points below $x$, but only those that are \textit{join-irreducibly defined}. 

To explain this latter notion, say that points $x,x'\in\mathbb{M}$ are \emph{$\mathsf{P}$-equivalent} just in case they agree on $p^+$ and $p^-$ for all $p\in \mathsf{P}$, i.e.
\[
            x\Vdash p^+ \Leftrightarrow x'\Vdash p^+ \quad\text{and}\quad x\Vdash p^- \Leftrightarrow x'\Vdash p^-.
        \]
As before, say that a point $x$ defines a partial valuation $v:\mathsf{P}\to\{0,1\}$ if $\forall p\in\mathsf{P}(x\Vdash p^+ \Leftrightarrow v(p)=1, \; x\Vdash p^- \Leftrightarrow v(p)=0)$. But say now that $x$ defines it \emph{join-irreducibly} if, in addition, whenever $x=\bigcurlyvee X$ for a non-empty finite set $X$, $x$ is $\mathsf{P}$-equivalent to some $x'\in X$.

So to summarize and make things explicit, the definition of $G$ (or call it $G_{\mathsf{K3}}$) comes down to:
    \[
        G_{\mathsf{K3}}(\mathbb{M}, x)\mathrel{:=}\{v_{y}\mid y\leq x\text{ and $y$ join-irreducibly defines some valuation}\},
    \]
    rendering the analogy with Birkhoff's representation apparent.

\printbibliography\label{biblio}

\end{document}